\providecommand{\U}[1]{\protect\rule{.1in}{.1in}}
\newtheorem{theorem}{Theorem}
\newtheorem{corollary}[theorem]{Corollary}
\newtheorem{lemma}[theorem]{Lemma}
\newenvironment{proof}[1][Proof]{\noindent\textbf{#1.} }{{\hfill $\Box$ \\}}
\begin{document}

\title{Understanding Wall's theorem on dependence of Lie relators in Burnside groups}
\author{Michael Vaughan-Lee}
\date{February 2020}
\maketitle

\begin{abstract}
G.E. Wall [7,8] gave two different proofs of a remarkable result about the
multilinear Lie relators satisfied by groups of prime power exponent $q$. He
showed that if $q$ is a power of the prime $p$, and if $f$ is a multilinear
Lie relator in $n$ variables where $n\neq1\operatorname{mod}(p-1)$, then $f=0$
is a consequence of multilinear Lie relators in fewer than $n$ variables. For
years I have struggled to understand his proofs, and while I still have not
the slightest clue about his proof in [7], I finally have some understanding
of his proof in [8]. In this note I offer my insights into Wall's second proof
of this theorem.

\end{abstract}

\section{Introduction}

This note is concerned with the multilinear Lie relators which hold in the
associated Lie rings of groups of prime-power exponent. I refer the reader to
Chapter 2 of my book \cite{vlee93b} for the definition of the associated Lie
ring of a group, and the definition of a Lie relator (or Lie identity). In
that chapter, for every prime power $q=p^{k}$ I explicitly construct a
sequence of Lie elements $K_{n}$ ($n=1,2,\ldots$) where $K_{n}$ is multilinear
in $x_{1},x_{2},\ldots,x_{n}$. (We can think of $K_{n}$ as an element of the
free Lie ring on generators $x_{1},x_{2},\ldots,x_{n}$. It is a linear
combination of Lie products $[y_{1},y_{2},\ldots,y_{n}]$ where $y_{1}%
,y_{2},\ldots y_{n}$ is a permutation of $x_{1},x_{2},\ldots,x_{n}$. The
element $K_{1}$ equals $qx_{1}.$) I prove that the associated Lie ring of any
group of exponent $q$ satisfies the identity $K_{n}=0$ for all $n$. I also
prove that if $f=0$ is a multilinear identity which holds in the associated
Lie ring of every group of exponent $q$, then $f=0$ is a consequence of the
identities $K_{n}=0$ ($n=1,2,\ldots$).

G.E. Wall \cite{wall86}, \cite{wall90} gave two proofs of the following
remarkable theorem which shows that in some sense most of the identities
$K_{n}=0$ are redundant.

\begin{theorem}
If $n\neq1\operatorname{mod}(p-1)$ then the identity $K_{n}=0$ is a
consequence of the identities $K_{m}=0$ for $m<n$.
\end{theorem}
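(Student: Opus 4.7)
The approach I would try is a character-theoretic averaging argument. Let $L_n$ denote the space of multilinear Lie polynomials in $x_1,\ldots,x_n$ over $\mathbb{Z}/q$, and let $C_n \subseteq L_n$ be the subspace of multilinear consequences of the identities $K_m = 0$ for $m < n$; the task is to prove that $K_n \in C_n$ whenever $n \not\equiv 1 \pmod{p-1}$. A natural supply of operators on $L_n$ is furnished by $\mathbb{F}_p^*$ acting by diagonal scaling on a subset of the variables: for $\lambda \in \mathbb{F}_p^*$, let $\sigma_\lambda$ denote the substitution $x_i \mapsto \lambda x_i$ for $i \geq 2$, fixing $x_1$. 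Since every element of $L_n$ is multilinear, $\sigma_\lambda$ acts on $L_n$ as scalar multiplication by $\lambda^{n-1}$, and it preserves $C_n$ because substitutions commute with the formation of consequences.

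The relevant arithmetic input is the identity $\sum_{\lambda \in \mathbb{F}_p^*} \lambda^{n-1} \equiv 0 \pmod p$ if and only if $n \not\equiv 1 \pmod{p-1}$. Hence the averaging operator $A = \sum_\lambda \sigma_\lambda$ vanishes on $L_n$ in exactly the range of interest. This alone is a hint and not a proof, since the statement $A(K_n)=0$ is automatic and does not by itself place $K_n$ inside $C_n$. What must be produced is an alternative identity of the shape
\[
K_n \;\equiv\; A(F_n) \pmod{C_n}
\]
for some cleverly chosen $F_n \in L_n$ built from lower-degree data. Given such an identity, the right-hand side equals $\bigl(\sum_\lambda \lambda^{n-1}\bigr) F_n$, which is zero modulo $p$ whenever $n \not\equiv 1 \pmod{p-1}$, forcing $K_n \in C_n$ and establishing the theorem.

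The manufacture of $F_n$ is the real content of the argument, and is where I expect to get stuck. I would start from the explicit construction of $K_n$ in Chapter~2 of \cite{vlee93b} as the multilinear component of the expansion of a $q$-th power identity, and look for a way to split off from $K_n$ a piece that is both $\mathbb{F}_p^*$-covariant under $\sigma_\lambda$ and lands in $C_n$. Fermat's little theorem $\lambda^p = \lambda$ in $\mathbb{F}_p$, combined with the restricted-Lie structure carried by the associated Lie ring of a Burnside group, should allow one to exchange a scalar Frobenius twist for a Lie-ring power operation and thereby express $\sigma_\lambda K_n$, modulo $C_n$, in terms of the scaling character. Wall's intricate combinatorics in \cite{wall90} presumably executes this exchange in a controlled way, tracking which multilinear contributions of weight $n$ are genuinely new and which are already multilinear consequences of some $K_m$ with $m < n$. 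The whole plan stands or falls on reconstructing that step; everything else reduces to routine linear-algebraic bookkeeping with the averaging identity.
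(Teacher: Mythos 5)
You have named the gap yourself, and it is fatal as the proposal stands: the construction of $F_{n}$ with $K_{n}\equiv A(F_{n})$ modulo lower consequences is the entire content of the theorem, and nothing in your outline produces it. Worse, the framework you have chosen cannot produce it. Your operators $\sigma_{\lambda}$ act on the whole multilinear component $L_{n}$ as the single scalar $\lambda^{n-1}$, so the desired identity $K_{n}\equiv A(F_{n})\pmod{C_{n}}$ is literally equivalent to $K_{n}\in C_{n}+\bigl(\sum_{\lambda}\lambda^{n-1}\bigr)L_{n}$; you would already need to know that $K_{n}$ is a lower consequence up to a multiple of $p$. And even granting that, the closing step fails for $q=p^{k}$ with $k>1$: the integer $\sum_{\lambda\in\mathbb{F}_{p}^{*}}\lambda^{n-1}$ is divisible by $p$ but not by $q$, while only $qL_{n}$ (via $K_{1}$), not $pL_{n}$, is known to lie in $C_{n}$, so ``zero modulo $p$'' does not force membership in $C_{n}$.

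The operators that actually do the work in the paper live not on the multilinear Lie module but on the free associative algebra $A$: Wall's ring $R$ of linear maps commuting with all $L$-preserving endomorphisms. Every such map is $\psi_{w}$ for a power series $w$, the ring $R$ is commutative, and the substitution $z=\log(1+x)$ yields orthogonal idempotents $\varepsilon_{m}=\psi_{z^{m}/m!}$ with $\psi_{(1+x)^{m}}=\sum_{r}\varepsilon_{r}m^{r}$. The key operator is $\eta=\sum_{r\geq0}\varepsilon_{1+r(p-1)}$: a number-theoretic lemma (primitive roots modulo $p$ plus a Vandermonde argument --- the same arithmetic germ you identified, but used quite differently) shows its coefficients are $p$-integral, and orthogonality of the idempotents shows that for $n\neq1\operatorname{mod}(p-1)$ the coefficients of $\sigma_{n}(\eta)$ sum to zero. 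One then evaluates a suitable integer multiple $\eta'=k\eta$ on $K_{n}$ in two ways. Since $\eta'$ restricts to $k$ times the identity on $L$, $\eta'(K_{n})=kK_{n}$ with $k$ coprime to $p$. On the other hand $K_{n}\equiv\psi_{(1+x)^{q}}(x_{1}x_{2}\ldots x_{n})$ modulo the associative subring $\Gamma_{n-1}$ generated by values of lower relators (this is where the Baker--Campbell--Hausdorff formula and the ``smoothing'' of $(\mathrm{e}^{x_{1}}\cdots\mathrm{e}^{x_{n}})^{q}$ enter), and commutativity of $R$ together with the vanishing coefficient sum and the symmetry of $\psi_{(1+x)^{q}}$ modulo $\Gamma_{n-1}$ force $kK_{n}\in\Gamma_{n-1}$. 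Combining with $qK_{n}\in\Gamma_{n-1}$ and $\gcd(k,q)=1$ gives $K_{n}\in\Gamma_{n-1}$, and the bracketing map $\delta$ carries this back into the Lie ideal $I_{n-1}$. The essential point you are missing is this asymmetry: the operator $\eta$ is the identity on Lie elements but a permutation sum with coefficient sum zero on the associative monomial $x_{1}x_{2}\ldots x_{n}$, whereas your $\sigma_{\lambda}$ is a scalar on both sides and therefore sees nothing.
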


This result ties in with (but vastly extends) previously known results about
groups of exponent $p$. It has been known for many years that the associated
Lie rings of groups of exponent $p$ have characteristic $p$ and satisfy the
$(p-1)$-Engel identity. These identities have helped enormously in the study
of finite groups of exponent $p$. The identity $px=0$ is the identity $K_{1}=0$ 
(in exponent $p$), and the $(p-1)$-Engel identity is equivalent to $K_{p}=0$ in
characteristic $p$. Magnus \cite{magnus50} proved that all Lie identities of
weight at most $p-1$ that hold in the associated Lie rings of groups of exponent
$p$ are consequences of the identity $px=0$, and Sanov \cite{sanov52}
extended this result to show that all Lie identities of weight at most $2p-2$
are consequences of the identity $px=0$ and the $(p-1)$-Engel identity. On the
other hand, Wall \cite{wall67} found a multilinear identity of weight $2p-1$
which holds in the associated Lie rings of groups of exponent $p$, and he
showed that for $p=5,7,11$ this identity is not a consequence of the
$(p-1)$-Engel identity in characteristic $p$. Havas and Vaughan-Lee
\cite{havasvl09} extended this result to the primes $13,17,19$. (It follows
that $K_{2p-1}=0$ is \emph{not} a consequence of the identities $\{K_{n}%
=0\,|\,n<2p-1\}$ for $p=5,7,11,13,17,19$.)

Most of Wall's proof in \cite{wall90} involves calculations in the free associative
algebra with unity over the rationals $\mathbb{Q}$, with free generators
$x_{1},x_{2},\ldots$. Wall calls this algebra $A$, and turns $A$ into a Lie
algebra over $\mathbb{Q}$ by setting $[a,b]=ab-ba$. He lets $L$ be the Lie
subalgebra of $A$ generated by $x_{1},x_{2},\ldots$. It is well known that $L$
is a free Lie algebra, freely generated by $x_{1},x_{2},\ldots$. For a proof
of this fact see [5, Corollary 1.4.2]. Wall then lets $R$ be the set of all
linear mappings $\theta:A\rightarrow A$ which commute with all algebra
endomorphisms $\varepsilon:A\rightarrow A$ such that $\varepsilon(L)\subseteq
L$. Wall implicitly assumes a number of properties of $R$, and states other
important properties without proof. No doubt these properties are
straightforward enough (even elementary) for those who are familiar with $R$,
and in fact their proofs are not hard. But I had never come across $R$ before,
and I could not begin to understand Wall's proof of Theorem 1 until I had
found my own proofs of these properties. So in the section below I develop the
properties of $R$ which Wall needs. I give the definitions of the relators
$K_{n}$ ($n=1,2,\ldots$) in Section 3, and establish some of their properties.
Then in the final section I give his proof of Theorem 1. The proof itself is
quite short, but it takes quite a lot of work in Section 2 and Section 3 to set
up the machinery needed.

We end the introduction stating and proving a standard result about products
of the free generators of $A$. The proof is easy enough, but we use the result
below (several times) and so it is convenient to state it as a lemma.

\begin{lemma}
Let $U_{r}\subset A$ be the set of all sums $\sum_{j}m_{j}$ where the summands
$m_{j}$ are products
\[
y_{1}y_{2}\ldots y_{i-1}[y_{i},y_{i+1}]y_{i+2}\ldots y_{r}
\]
where $1\leq i<r$ and where $y_{1},y_{2},\ldots,y_{r}$ is a permutation of
$x_{1},x_{2},\ldots,x_{r}$. Let $\pi$ be a permutation in the symmetric group
$S_{r}$. Then $x_{\pi1}x_{\pi2}\ldots x_{\pi r}=x_{1}x_{2}\ldots x_{r}+u$ for
some $u\in U_{r}$.
\end{lemma}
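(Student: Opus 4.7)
The plan is to proceed by induction on the number $\ell(\pi)$ of inversions of $\pi$, equivalently the length of $\pi$ as a reduced word in the adjacent transpositions $\sigma_i = (i, i+1)$. When $\ell(\pi) = 0$, $\pi$ is the identity and the statement holds trivially with $u = 0$.

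For the inductive step, suppose $\pi \neq \mathrm{id}$. Then there exists an index $i$ with $1 \leq i < r$ and $\pi(i) > \pi(i+1)$. Set $\pi' = \pi \sigma_i$, so $\ell(\pi') = \ell(\pi) - 1$. The products $x_{\pi 1} \cdots x_{\pi r}$ and $x_{\pi' 1} \cdots x_{\pi' r}$ agree outside positions $i$ and $i+1$, where the letters $x_{\pi(i)}$ and $x_{\pi(i+1)}$ are swapped. Applying the identity $ab - ba = [a,b]$ at these two positions yields
\[
x_{\pi 1} \cdots x_{\pi r} = x_{\pi' 1} \cdots x_{\pi' r} + x_{\pi 1} \cdots x_{\pi(i-1)}\,[x_{\pi(i)}, x_{\pi(i+1)}]\,x_{\pi(i+2)} \cdots x_{\pi r}.
\]
The second summand on the right has the exact form specified in the definition of $U_{r}$, because $x_{\pi 1}, x_{\pi 2}, \ldots, x_{\pi r}$ is a permutation of $x_{1}, x_{2}, \ldots, x_{r}$. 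By the inductive hypothesis, $x_{\pi' 1} \cdots x_{\pi' r} = x_{1} x_{2} \cdots x_{r} + u'$ for some $u' \in U_{r}$, and since $U_{r}$ is closed under addition the sum of $u'$ and the commutator term above is an element $u \in U_{r}$ with the required property.

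No serious obstacle arises; the result is a formal consequence of the fact that $S_{r}$ is generated by adjacent transpositions together with the defining relation $ab = ba + [a,b]$ in $A$. The only minor point worth flagging is that the letters flanking the commutator in the induced term come from the \emph{current} permutation $\pi$ rather than the canonical order $x_{1}, x_{2}, \ldots$, but this is exactly what the definition of $U_{r}$ permits.
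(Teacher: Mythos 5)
Your proof is correct and is essentially the paper's own argument: the paper performs a ``bubble sort,'' repeatedly swapping an adjacent descent pair at the cost of a commutator term in $U_r$, which is exactly your induction on the number of inversions of $\pi$. The only difference is cosmetic — you make the termination argument explicit via $\ell(\pi's) = \ell(\pi) - 1$, where the paper simply says the iteration ends after finitely many steps.
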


\begin{proof}
We apply a \textquotedblleft bubble sort\textquotedblright\ to the product
$x_{\pi1}x_{\pi2}\ldots x_{\pi r}$. If $\pi=1_{S_{r}}$ then there is nothing
to prove. If $\pi\neq1_{S_{r}}$, look for the first integer $i$ such that $\pi
i>\pi(i+1)$. Then%
\begin{align*}
& x_{\pi1}\ldots x_{\pi i}x_{\pi(i+1)}\ldots x_{\pi r}\\
& =x_{\pi1}\ldots x_{\pi(i+1)}x_{\pi i}\ldots x_{\pi r}+x_{\pi1}\ldots\lbrack
x_{\pi i},x_{\pi(i+1)}]\ldots x_{\pi r}\\
& =x_{\pi1}\ldots x_{\pi(i+1)}x_{\pi i}\ldots x_{\pi r}+u\text{ with }u\in
U_{r}\text{.}%
\end{align*}
We replace $x_{\pi1}\ldots x_{\pi i}x_{\pi(i+1)}\ldots x_{\pi r}$ by $x_{\pi
1}\ldots x_{\pi(i+1)}x_{\pi i}\ldots x_{\pi r}$, and iterate. After a number
of iterations we obtain $x_{1}x_{2}\ldots x_{r}$.
\end{proof}

\section{The algebra $R$}

As mentioned above, Wall defines $R$ to be the set of all linear mappings
$\theta:A\rightarrow A$ which commute with all algebra endomorphisms
$\varepsilon:A\rightarrow A$ such that $\varepsilon(L)\subseteq L$. It is
clear from the definition of $R$ that it is closed under addition, scalar
multiplication, and composition of mappings. So $R$ is an algebra over
$\mathbb{Q}$.

\begin{lemma}
If $\theta\in R$ then $\theta(1)\in\mathbb{Q}$, and if $r>0$, then
\[
\theta(x_{1}x_{2}\ldots x_{r})=\sum_{\pi\in S_{r}}a_{\pi}x_{\pi 1}%
x_{\pi 2}\ldots x_{\pi r}
\]
for some $a_{\pi}\in\mathbb{Q}$.
\end{lemma}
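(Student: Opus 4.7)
The plan is to exploit two simple families of algebra endomorphisms $\varepsilon\colon A\to A$ satisfying $\varepsilon(L)\subseteq L$: an ``augmentation-like'' map that kills selected generators, and the ``rescaling'' maps $x_i\mapsto\lambda_i x_i$ for $\lambda_i\in\mathbb{Q}$. Both preserve $L$ because each generator is sent to a scalar multiple of itself (possibly zero), so the images of the generating set of $L$ lie in $L$; since any algebra endomorphism automatically respects the bracket $[a,b]=ab-ba$, the whole Lie subalgebra $L$ is carried into $L$.

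For $\theta(1)$, I would take the endomorphism $\varepsilon_{0}$ sending every $x_i$ to $0$. Its image is $\mathbb{Q}$, and $\varepsilon_{0}(L)=\{0\}\subseteq L$. Since $\varepsilon_{0}$ is a unital algebra endomorphism, $\varepsilon_{0}(1)=1$, so
\[
\theta(1)=\theta(\varepsilon_{0}(1))=\varepsilon_{0}(\theta(1))\in\varepsilon_{0}(A)=\mathbb{Q}.
\]

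For $r>0$, I would first apply the endomorphism $\varepsilon$ with $\varepsilon(x_i)=x_i$ for $i\leq r$ and $\varepsilon(x_i)=0$ for $i>r$. Since $\varepsilon(x_1 x_2\cdots x_r)=x_1 x_2\cdots x_r$, commutativity with $\theta$ gives $\theta(x_1\cdots x_r)=\varepsilon(\theta(x_1\cdots x_r))$, so only monomials in $x_1,\ldots,x_r$ can appear in $\theta(x_1\cdots x_r)$. Then I would apply the rescaling endomorphism $\varepsilon_{\lambda}$ defined by $\varepsilon_{\lambda}(x_i)=\lambda_i x_i$. Writing $\theta(x_1\cdots x_r)=\sum_{m} c_m m$ as a sum over monomials in $x_1,\ldots,x_r$, and letting $d_i(m)$ denote the degree of $x_i$ in $m$, commutativity with $\varepsilon_{\lambda}$ yields
\[
\lambda_{1}\lambda_{2}\cdots\lambda_{r}\sum_{m}c_m\,m=\varepsilon_{\lambda}\!\bigl(\theta(x_1\cdots x_r)\bigr)=\theta\!\bigl(\varepsilon_{\lambda}(x_1\cdots x_r)\bigr)=\sum_{m}c_m\,\lambda_{1}^{d_1(m)}\cdots\lambda_{r}^{d_r(m)}\,m.
\]

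Since distinct monomials in $A$ are linearly independent, the two sides can be equated monomial by monomial, giving the polynomial identity $c_m(\lambda_{1}\cdots\lambda_{r}-\lambda_{1}^{d_1(m)}\cdots\lambda_{r}^{d_r(m)})=0$ for each $m$; because $\mathbb{Q}$ is infinite, this forces $c_m=0$ unless $d_i(m)=1$ for every $i\leq r$. The surviving monomials are exactly the rearrangements $x_{\pi 1}x_{\pi 2}\cdots x_{\pi r}$ with $\pi\in S_{r}$, which yields the claimed expression with coefficients $a_{\pi}\in\mathbb{Q}$. No step looks genuinely hard; the only mild subtlety is the coefficient-comparison in the scaling identity, and the verification in each case that the chosen $\varepsilon$ really does carry $L$ into $L$, which reduces to the trivial observation $\varepsilon(x_i)\in L$.
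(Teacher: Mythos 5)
Your proof is correct and follows essentially the same route as the paper: the endomorphism killing all generators handles $\theta(1)$, the endomorphism killing $x_i$ for $i>r$ confines $\theta(x_1\cdots x_r)$ to the subalgebra on $x_1,\ldots,x_r$, and the rescalings $x_i\mapsto\lambda_i x_i$ force homogeneity of degree $1$ in each variable. (Only a cosmetic slip: in your displayed chain the labels $\varepsilon_{\lambda}(\theta(\cdot))$ and $\theta(\varepsilon_{\lambda}(\cdot))$ are attached to the wrong ends, but the equality itself is exactly the commutativity relation and the argument is unaffected.)
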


\begin{proof}
First consider $\theta(1)$. If we let $\varepsilon$ be the endomorphism of $A$
mapping $x_{i}$ to $0$ for all $i$, then the fact that $\varepsilon
\theta=\theta\varepsilon$ implies that $\theta(1)\in\mathbb{Q}$.

Next consider $\theta(x_{1})$. If we let $\varepsilon$ be the endomorphism of
$A$ mapping $x_{1}$ to $x_{1}$ and mapping all other free generators of $A $
to zero, then the fact that $\varepsilon\theta=\theta\varepsilon$ implies that
$\theta(x_{1})$ is a polynomial in $x_{1}$. Let $\theta(x_{1})=f(x_{1})$ where
$f$ is a polynomial. If $k\in\mathbb{Q}$, then $\theta$ commutes with the
endomorphism of $A$ mapping $x_{1}$ to $kx_{1}$ and mapping $x_{i}$ to $x_{i}$
for $i>1$. So $\theta(kx_{1})=f(kx_{1})$. But $\theta$ is linear, and so
$\theta(kx_{1})=kf(x_{1})$ This implies that $f(x_{1})=ax_{1}$ for some
$a\in\mathbb{Q}$.

Similar considerations imply that $\theta(x_{1}x_{2}\ldots x_{r})$ lies in the
subalgebra of $A$ generated by $x_{1},x_{2},\ldots,x_{r}$, and by considering
endomorphisms of $A$ mapping $x_{i}$ to $k_{i}x_{i}$ for $i=1,2,\ldots,r$
($k_{i}\in\mathbb{Q}$) we see that either $\theta(x_{1}x_{2}\ldots x_{r})=0$,
or $\theta(x_{1}x_{2}\ldots x_{r})$ is homogeneous of degree 1 in each of
$x_{1},x_{2},\ldots,x_{r}$.
\end{proof}

\bigskip Note that the fact that $\theta$ commutes with all $L$ preserving
endomorphisms of $A$ implies that if%
\[
\theta(x_{1}x_{2}\ldots x_{r})=\sum_{\pi\in S_{r}}\alpha_{\pi}x_{\pi %
1}x_{\pi 2}\ldots x_{\pi r}
\]
then%
\[
\theta(y_{1}y_{2}\ldots y_{r})=\sum_{\pi\in S_{r}}\alpha_{\pi}y_{\pi %
1}y_{\pi 2}\ldots y_{\pi r}
\]
for all $y_{1},y_{2},\ldots,y_{r}\in L$. So the values of $\theta(x_{1}%
x_{2}\ldots x_{r})$ for $r=0,1,2,\ldots$ determine $\theta$. We let
$\sigma_{0}(\theta)=\theta(1)$, and if%
\[
\theta(x_{1}x_{2}\ldots x_{r})=\sum_{\pi\in S_{r}}\alpha_{\pi}x_{\pi %
1}x_{\pi 2}\ldots x_{\pi r}
\]
then we let $\sigma_{r}(\theta)=\sum_{\pi\in S_{r}}\alpha_{\pi}\pi^{-1}$ in the
group ring $\mathbb{Q}S_{r}$. So $\theta$ is determined by the sequence
$(\sigma_{0}(\theta),\sigma_{1}(\theta),\ldots)$. It is straightforward to
show that if $\theta,\varphi\in R$, and if we define $\theta\circ\varphi$ by
setting
\[
(\theta\circ\varphi)(x_{1}x_{2}\ldots x_{r})=(\theta(\varphi(x_{1}x_{2}\ldots
x_{r}))
\]
then $\sigma_{r}(\theta\circ\varphi)=\sigma_{r}(\theta)\sigma_{r}(\varphi)$
for $r=0,1,2,\ldots$.

Now let $x$ be an indeterminate and let $\mathbb{Q}[[x]]$ be the power series
ring in $x$ over $\mathbb{Q}$. If $w\in\mathbb{Q}[[x]]$ then we define the
linear transformation $\psi_{w}:A\rightarrow A$ as follows. First we set
$\psi_{w}(1)=w(0)$. Next, for $r>0$ we let $\psi_{w}(x_{1}x_{2}\ldots x_{r})$
be the $\{x_{1},x_{2},\ldots,x_{r}\}$-multilinear component of%
\[
w((1+x_{1})(1+x_{2})\ldots(1+x_{r})-1).
\]
Let $y_{1},y_{2},\ldots,y_{r}$ be free generators of $A$. Then if%
\[
\psi_{w}(x_{1}x_{2}\ldots x_{r})=\sum_{\pi\in S_{r}}\alpha_{\pi}x_{\pi %
1}x_{\pi 2}\ldots x_{\pi r}
\]
we set%
\[
\psi_{w}(y_{1}y_{2}\ldots y_{r})=\sum_{\pi\in S_{r}}\alpha_{\pi}y_{\pi %
1}y_{\pi 2}\ldots y_{\pi r}.
\]
Note that if $n>r$ then $((1+x_{1})(1+x_{2})\ldots(1+x_{r})-1)^{n}$ has no
terms of degree $r$, so that $\psi_{w}(x_{1}x_{2}\ldots x_{r})$ is well
defined. In fact, if $w=\sum_{n=0}^{\infty}\beta_{n}x^{n}$ then%
\[
\psi_{w}(x_{1}x_{2}\ldots x_{r})=\psi_{w^{\prime}}(x_{1}x_{2}\ldots x_{r})
\]
where $w^{\prime}=\sum_{n=0}^{r}\beta_{n}x^{n}$. So $\psi_{w}:A\rightarrow A$
is a well defined linear transformation. (There is no problem over
convergence, since if $a\in A$ has degree $r$ then $\psi_{w}(a)=\psi
_{w^{\prime}}(a)$.) In addition, it is clear that if we let $V$ be the
$\mathbb{Q}$-linear span of $x_{1},x_{2},\ldots$, then $\psi_{w}$ commutes
with any endomorphism $\varepsilon:A\rightarrow A$ such that $\varepsilon
(V)\subseteq V$.

\begin{lemma}
If $w\in\mathbb{Q}[[x]]$ then $\psi_{w}\in R$.
\end{lemma}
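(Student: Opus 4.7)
The plan is to reduce $\psi_w \in R$ to the identity
\[
\psi_w(u_1 u_2 \cdots u_r) \;=\; \sum_{\pi \in S_r} \alpha_\pi\, u_{\pi 1} u_{\pi 2} \cdots u_{\pi r} \qquad (\ast)
\]
for every $r$ and every $u_1, \ldots, u_r \in L$, where the $\alpha_\pi$ are those in $\psi_w(x_1 \cdots x_r) = \sum_\pi \alpha_\pi x_{\pi 1} \cdots x_{\pi r}$. Indeed, for any $L$-preserving endomorphism $\varepsilon$ with $\varepsilon(x_{i_k}) = u_k \in L$, the established formula for $\psi_w$ on products of free generators gives $\varepsilon\psi_w(x_{i_1}\cdots x_{i_r}) = \sum_\pi \alpha_\pi u_{\pi 1}\cdots u_{\pi r}$, while $\psi_w\varepsilon(x_{i_1}\cdots x_{i_r}) = \psi_w(u_1 u_2 \cdots u_r)$, so $(\ast)$ forces the commutation on every monomial, and linearity completes the argument.

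Both sides of $(\ast)$ are multilinear in $(u_1, \ldots, u_r)$, so it suffices to verify it when each $u_i$ is a single iterated Lie bracket of free generators. I would then induct on $\sum_i(\deg u_i - 1)$. The base case (every $u_i \in V$) is the $V$-preserving commutation of $\psi_w$ already noted. For the inductive step, write $u_1 = [a, b]$ with $a, b \in L$ of strictly smaller degree; then
\[
\psi_w([a,b]\, u_2 \cdots u_r) \;=\; \psi_w(ab\, u_2 \cdots u_r) \;-\; \psi_w(ba\, u_2 \cdots u_r),
\]
and each $\psi_w$-value on the right applies to a product of $r+1$ Lie elements with strictly smaller inductive parameter, so the inductive hypothesis expands each as an $S_{r+1}$-sum $\sum_{\tau \in S_{r+1}} \alpha_\tau^{(r+1)}(\cdots)_\tau$.

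The main obstacle is to collapse this difference of two $S_{r+1}$-sums back to the $S_r$-sum $\sum_{\pi \in S_r} \alpha_\pi^{(r)}([a,b], u_2, \ldots, u_r)_\pi$ required by $(\ast)$. I would reinterpret the sums via a power-series device: for $v_i \in A$, the expression $\sum_\pi \alpha_\pi^{(s)} v_{\pi 1} \cdots v_{\pi s}$ is, by direct expansion, the coefficient of $t_1 \cdots t_s$ in $w\bigl((1+t_1 v_1)(1+t_2 v_2) \cdots (1+t_s v_s) - 1\bigr)$, and this holds for $v_i$ in all of $A$ rather than only $L$. Writing $\Psi_w$ for this coefficient-extraction operator, the required collapse becomes the polynomial identity
\[
\Psi_w(a, b, u_2, \ldots, u_r) - \Psi_w(b, a, u_2, \ldots, u_r) \;=\; \Psi_w([a,b], u_2, \ldots, u_r),
\]
valid for all $a, b, u_2, \ldots, u_r \in A$. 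This identity rests on the factorisation $(1+\sigma a)(1+\rho b) - (1+\sigma b)(1+\rho a) = (\sigma - \rho)(a-b) + \sigma\rho [a,b]$: in the $\sigma\rho$-coefficient of the relevant power series, the ``cross'' contributions involving $(a-b)$ cancel by the symmetry $\sigma \leftrightarrow \rho$, leaving only the ``double'' contribution carrying $[a,b]$, and carrying out this bookkeeping cleanly is the most delicate step of the argument.
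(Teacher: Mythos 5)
Your proposal is correct in outline and is essentially the paper's proof: the paper makes the same reduction to your identity $(\ast)$ for Lie monomials $u_i$, runs the same induction on $\sum_i(\deg u_i-1)$ via $[a,b]=ab-ba$, and rests on the same key identity $\Psi_w(a,b,u_2,\ldots,u_r)-\Psi_w(b,a,u_2,\ldots,u_r)=\Psi_w([a,b],u_2,\ldots,u_r)$ (there stated, for $w=x^m$, as $U_{r+1}(\ldots,x_i,x_{i+1},\ldots)-U_{r+1}(\ldots,x_{i+1},x_i,\ldots)=U_r(\ldots,[x_i,x_{i+1}],\ldots)$ and transported to arbitrary arguments by substitution). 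The only point of divergence is your treatment of that identity, which is also the one step you leave unfinished, and as phrased it needs repair: $w$ is not additive, so the factorisation $(1+\sigma a)(1+\rho b)-(1+\sigma b)(1+\rho a)=(\sigma-\rho)(a-b)+\sigma\rho[a,b]$ cannot simply be pushed through $w$. The bookkeeping that does work is to reduce to $w=x^m$ by linearity and expand $\bigl((1+\sigma a)(1+\rho b)P-1\bigr)^m$ according to which of the $m$ factors contribute $\sigma aP$, $\rho bP$ or $\sigma\rho\,abP$ rather than $P-1$: the cross terms, with $\sigma aP$ and $\rho bP$ in two distinct positions, are matched bijectively with the corresponding cross terms of $\bigl((1+\sigma b)(1+\rho a)P-1\bigr)^m$ by exchanging the two positions, while the diagonal terms survive as $\sum_{j}(P-1)^{j-1}[a,b]P(P-1)^{m-j}$, which is the $\tau$-coefficient of $\bigl((1+\tau[a,b])P-1\bigr)^m$. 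This is precisely the paper's decomposition of $\sum x_{S_1}x_{S_2}\cdots x_{S_m}$ into the sum $B$ over ordered partitions separating $i$ from $i+1$ (invariant under the swap) and the sum $C$ over partitions putting them in one block (which yields the bracket). So the approach is the same, and the step you flag as delicate is exactly the combinatorial lemma that carries the paper's proof.
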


\begin{proof}
By linearity, it is sufficient to show that $\psi_{x^{m}}\in R$ for
$m=1,2,\ldots$. To this end we introduce some notation to enable us to
describe the form of $\psi_{x^{m}}(x_{1}x_{2}\ldots x_{r})$. If $S=\{i_{1}%
,i_{2},\ldots,i_{k}\}$ is a subset of $\{1,2,\ldots,r\}$ with $i_{1}%
<i_{2}<\ldots<i_{k}$ then we define $x_{S}=x_{i_{1}}x_{i_{2}}\ldots x_{i_{k}}%
$. So%
\[
\psi_{x^{m}}(x_{1}x_{2}\ldots x_{r})=\sum x_{S_{1}}x_{S_{2}}\ldots x_{S_{m}}
\]
where the summation is taken over all partitions of $\{1,2,\ldots,r\}$ into an
ordered sequence of disjoint non-empty subsets $S_{1},S_{2},\ldots,S_{m}$.
Each partition of $\{1,2,\ldots,r\}$ into $m$ disjoint non-empty subsets
yields $m!$ ordered sequences $S_{1},S_{2},\ldots,S_{m}$. If $y_{1}%
,y_{2},\ldots,y_{r}\in A$ we let $U_{r}(y_{1},y_{2},\ldots,y_{r})$ be the
image of $\psi_{x^{m}}(x_{1}x_{2}\ldots x_{r})$ under an endomorphism of $A$
mapping $x_{i}$ to $y_{i}$ for $i=1,2,\ldots,r$. We need to show that if
$y_{1},y_{2},\ldots,y_{r}\in L$ then $\psi_{x^{m}}(y_{1}y_{2}\ldots
y_{r})=U_{r}(y_{1},y_{2},\ldots,y_{r})$, and by linearity it is sufficient to
establish this in the case when the $y_{i}$ are Lie products of the form
$[x_{j_{1}},x_{j_{2}},\ldots,x_{j_{k}}]$. The key to proving this is to show
that if $1\leq i\leq r$ then%
\begin{align*}
& U_{r+1}(x_{1},\ldots,x_{i},x_{i+1},\ldots,x_{r+1})-U_{r+1}(x_{1}%
,\ldots,x_{i+1},x_{i},\ldots,x_{r+1})\\
& =U_{r}(x_{1},\ldots,[x_{i},x_{i+1}],\ldots,x_{r+1}).
\end{align*}
To see this write%
\[
U_{r+1}(x_{1},\ldots,x_{i},x_{i+1},\ldots,x_{r+1})=\sum x_{S_{1}}x_{S_{2}%
}\ldots x_{S_{m}},
\]
where now the sum is over all partitions of $\{1,2,\ldots,r,r+1\}$ into an
ordered sequence of disjoint non-empty subsets $S_{1},S_{2},\ldots,S_{m}$.
Decompose the sum 
\[
\sum x_{S_{1}}x_{S_{2}}\ldots x_{S_{m}}
\]
into $B+C$, where
$B$ is the sum of all $x_{S_{1}}x_{S_{2}}\ldots x_{S_{m}}$ where $i$ and $i+1$
lie in different subsets, and where $C$ is the sum of all $x_{S_{1}}x_{S_{2}%
}\ldots x_{S_{m}}$ where $i$ and $i+1$ lie in the same subset. Note that%
\[
C=U_{r}(x_{1},\ldots,x_{i}x_{i+1},\ldots,x_{r+1}).
\]
Interchanging $x_{i}$ and $x_{i+1}$ leaves $B$ unchanged (although it permutes
the summands) and maps $C$ to $U_{r}(x_{1},\ldots,x_{i+1}x_{i},\ldots
,x_{r+1})$. So%
\begin{align*}
& U_{r+1}(x_{1},\ldots,x_{i},x_{i+1},\ldots,x_{r+1})-U_{r+1}(x_{1}%
,\ldots,x_{i+1},x_{i},\ldots,x_{r+1})\\
& =U_{r}(x_{1},\ldots,x_{i}x_{i+1},\ldots,x_{r+1})-U_{r}(x_{1},\ldots
,x_{i+1}x_{i},\ldots,x_{r+1})\\
& =U_{r}(x_{1},\ldots,[x_{i},x_{i+1}],\ldots,x_{r+1})
\end{align*}
as claimed.

Now consider $\psi_{x^{m}}(y_{1}y_{2}\ldots y_{r})$ where the $y_{i}$ are Lie
products of the free generators of $L$. We need to prove that $\psi_{x^{m}%
}(y_{1}y_{2}\ldots y_{r})=U_{r}(y_{1},y_{2},\ldots,y_{r})$, and we do this by
induction on $\sum_{i=1}^{r}\deg(y_{i})-r$. If $\sum_{i=1}^{r}\deg(y_{i}%
)-r=0$, then all the $y_{i}$ have weight 1 and there is nothing to prove. So
suppose that $1\leq i\leq r$ and that $y_{i}=[z,t]$ for some $z,t\in L$. Then%
\begin{align*}
& \psi_{x^{m}}(y_{1}y_{2}\ldots y_{r})\\
& =\psi_{x^{m}}(y_{1}\ldots\lbrack z,t]\ldots y_{r})\\
& =\psi_{x^{m}}(y_{1}\ldots zt\ldots y_{r})-\psi_{x^{m}}(y_{1}\ldots tz\ldots
y_{r})
\end{align*}
and%
\begin{align*}
& U_{r}(y_{1},y_{2},\ldots,y_{r})\\
& =U_{r}(y_{1},\ldots,[z,t],\ldots,y_{r})\\
& =U_{r+1}(y_{1},\ldots,z,t,\ldots,y_{r})-U_{r+1}(y_{1},\ldots,t,z,\ldots
,y_{r}).
\end{align*}
So it is sufficient to show that%
\[
\psi_{x^{m}}(y_{1}\ldots zt\ldots y_{r})=U_{r+1}(y_{1},\ldots,z,t,\ldots
,y_{r})
\]
and that%
\[
\psi_{x^{m}}(y_{1}\ldots tz\ldots y_{r})=U_{r+1}(y_{1},\ldots,t,z,\ldots
,y_{r})
\]
and this follows by induction.
\end{proof}

\begin{lemma}
If $\psi\in R$ then $\psi=\psi_{w}$ for some unique $w\in\mathbb{Q}[[x]]$.
\end{lemma}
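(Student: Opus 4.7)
My plan is to build the series $w = \sum_{m \geq 0} \beta_m x^m$ one coefficient at a time, exploiting those $L$-preserving endomorphisms of $A$ that send a free generator to a Lie bracket $[x_i, x_{i+1}]$, and then verify $\psi = \psi_w$. Uniqueness is immediate: if $\psi_w = \psi_{w'}$, then $\beta_0 = \psi_w(1) = \psi_{w'}(1) = \beta'_0$, and if $\beta_m = \beta'_m$ for $m < r$, equating the two sides at $x_1 \ldots x_r$ reduces to $\beta_r \psi_{x^r}(x_1 \ldots x_r) = \beta'_r \psi_{x^r}(x_1 \ldots x_r)$; since $\psi_{x^r}(x_1 \ldots x_r) = \sum_{\pi \in S_r} x_{\pi 1} \ldots x_{\pi r} \neq 0$, this forces $\beta_r = \beta'_r$.

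For existence, I would set $\beta_0 := \psi(1) \in \mathbb{Q}$. Inductively, suppose $\beta_0, \ldots, \beta_{r-1}$ are chosen so that $\theta_r := \psi - \sum_{m < r} \beta_m \psi_{x^m}$, which belongs to $R$ because $R$ is an algebra and each $\psi_{x^m} \in R$, satisfies $\theta_r(x_1 \ldots x_s) = 0$ for all $s < r$. Writing $\theta_r(x_1 \ldots x_r) = \sum_{\pi \in S_r} a_\pi x_{\pi 1} \ldots x_{\pi r}$, the heart of the matter is to prove that all $a_\pi$ are equal. Given this, I take $\beta_r := a_e$, and then $\theta_{r+1} := \theta_r - \beta_r \psi_{x^r}$ kills $x_1 \ldots x_r$ (and continues to kill $x_1 \ldots x_s$ for $s < r$, since $\psi_{x^r}$ does). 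The resulting $w := \sum_m \beta_m x^m$ then satisfies $\psi_w(x_1 \ldots x_r) = \psi(x_1 \ldots x_r)$ for all $r$, so $\psi = \psi_w$ by the remark that elements of $R$ are determined by these values.

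The crucial step, and the expected obstacle, is the collapse of all $a_\pi$ to a common value. For each $1 \leq i \leq r-1$ I introduce the endomorphism $\varepsilon_i : A \to A$ defined by $\varepsilon_i(x_j) = x_j$ for $j < i$, $\varepsilon_i(x_i) = [x_i, x_{i+1}]$, and $\varepsilon_i(x_j) = x_{j+1}$ for $j > i$. Each $\varepsilon_i$ sends free generators into $L$ and is therefore $L$-preserving, so
\[
\theta_r \varepsilon_i(x_1 \ldots x_{r-1}) = \varepsilon_i \theta_r(x_1 \ldots x_{r-1}) = 0,
\]
which reads $\theta_r(x_1 \ldots x_{i-1} [x_i, x_{i+1}] x_{i+2} \ldots x_r) = 0$. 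Expanding the bracket and applying the multilinearity observation that follows the structural lemma for $R$ (with $\tau_i = (i, i+1) \in S_r$) rewrites this equation as $\sum_\rho (a_\rho - a_{\tau_i \rho}) x_{\rho 1} \ldots x_{\rho r} = 0$, giving $a_\rho = a_{\tau_i \rho}$ for every $\rho$. Since the adjacent transpositions $\tau_1, \ldots, \tau_{r-1}$ generate $S_r$, all $a_\pi$ equal $a_e$. This is exactly where the argument must use endomorphisms with a genuine Lie bracket in their image: every $\psi_w$ commutes with $V$-preserving endomorphisms for free, so only the $\varepsilon_i$ create the constraints needed to cut the $r!$ potential coefficients $a_\pi$ down to the single scalar $\beta_r$.
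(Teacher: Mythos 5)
Your proof is correct and follows essentially the same route as the paper: peel off $\beta_m\psi_{x^m}$ inductively and show that the remaining top-degree coefficients $a_\pi$ are all equal because the difference operator kills products of $r-1$ elements of $L$ (your adjacent-transposition argument via the endomorphisms $\varepsilon_i$ is exactly the mechanism behind the paper's appeal to Lemma 2). The uniqueness argument, which the paper leaves implicit, is also fine.
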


\begin{proof}
Let $\psi\in R$ and let $\psi(1)=\alpha_{0}$, $\psi(x_{1})=\alpha_{1}x_{1}$
with $\alpha_{0},\alpha_{1}\in\mathbb{Q}$. Let $\varphi=\psi-\alpha_{0}%
\psi_{1}-\alpha_{1}\psi_{x}$. Then $\varphi(1)=\varphi(x_{1})=0$. Let
$\varphi(x_{1}x_{2})=\alpha x_{1}x_{2}+\beta x_{2}x_{1}$. Since $\varphi\in R$
it follows that $\varphi([x_{1},x_{2}])=0$ so that $\varphi(x_{1}%
x_{2})=\varphi(x_{2}x_{1})$. Hence $\alpha=\beta$. Let $\alpha
_{2}=\alpha$, and let $\varphi_{2}=\psi-\alpha_{0}\psi_{1}-\alpha_{1}\psi
_{x}-\alpha_{2}\psi_{x^{2}}$. Then $\sigma_{0}(\varphi_{2})=\sigma_{1}%
(\varphi_{2})=\sigma_{2}(\varphi_{2})=0$. Suppose by induction that for some
$n>2$ we have found $\alpha_{0},\alpha_{1},\ldots,\alpha_{n-1}\in\mathbb{Q}$
such that%
\[
\varphi_{n-1}=\psi-\alpha_{0}\psi_{1}-\alpha_{1}\psi_{x}-\ldots-\alpha_{n-1}\psi_{x^{n-1}}
\]
has the property that $\sigma_{i}(\varphi_{n-1})=0$ for $i=0,1,\ldots,n-1$.
Then $\varphi_{n-1}(y_{1}y_{2}\ldots y_{m})=0$ whenever $m<n$ and $y_{1}%
,y_{2},\ldots,y_{m}\in L$. It follows from Lemma 2 that%
\[
\varphi_{n-1}(x_{\pi1}x_{\pi2}\ldots x_{\pi n})=\varphi_{n-1}(x_{1}x_{2}\ldots
x_{n})
\]
for all permutation $\pi\in S_{n}$. So $\sigma_{n}(\varphi_{n-1})=\alpha
_{n}\sum_{\pi\in S_{n}}\pi$ for some $\alpha_{n}\in\mathbb{Q}$, and%
\[
\varphi_{n}=\varphi=\psi-\alpha_{0}\psi_{1}-\alpha_{1}\psi_{x}-\ldots
-\alpha_{n-1}\psi_{x^{n-1}}-\alpha_{n}\psi_{x^{n}}
\]
has the property that $\sigma_{i}(\varphi_{n})=0$ for $i=0,1,\ldots,n$.
Continuing in this way we obtain a sequence of rationals $\alpha_{0}%
,\alpha_{1},\ldots$ such that $\psi=\psi_{w}$ where%
\[
w=\sum_{i=0}^{\infty}\alpha_{i}x^{i}.
\]

\end{proof}

\begin{lemma}
Let $X=x+1$. Then $\psi_{X^{m}}\circ\psi_{X^{n}}=\psi_{X^{mn}}$ for all
$m,n\geq0$.
\end{lemma}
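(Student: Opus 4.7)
My plan is to invoke Lemma~5 to reduce the identity to a statement about values on $x_{1}x_{2}\cdots x_{r}$. Since $R$ is closed under composition, $\psi_{X^{m}}\circ\psi_{X^{n}}\in R$ and so equals $\psi_{w}$ for a unique $w\in\mathbb{Q}[[x]]$; it therefore suffices to show $(\psi_{X^{m}}\circ\psi_{X^{n}})(x_{1}\cdots x_{r})=\psi_{X^{mn}}(x_{1}\cdots x_{r})$ for every $r\geq0$.

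The first step is to record the combinatorial expansion that falls out of the definition of $\psi_{X^{k}}$: the multilinear component of $((1+x_{1})\cdots(1+x_{r}))^{k}$ is obtained by choosing, for each $i\in\{1,\ldots,r\}$, one of the $k$ copies of $(1+x_{i})$ from which to take $x_{i}$. Encoding the choices as a function $h:\{1,\ldots,r\}\to\{1,\ldots,k\}$, the surviving monomial is $x_{\tau_{h}(1)}\cdots x_{\tau_{h}(r)}$, where $\tau_{h}\in S_{r}$ lists $\{1,\ldots,r\}$ in the stable sort by $h$-value (ties broken by natural order), so
\[
\psi_{X^{k}}(x_{1}x_{2}\cdots x_{r})=\sum_{h}x_{\tau_{h}(1)}x_{\tau_{h}(2)}\cdots x_{\tau_{h}(r)}.
\]
Taking $k=mn$ gives the right-hand side directly. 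For the left, I would apply the formula with $k=n$, and then, since each resulting monomial $x_{\tau_{f}(1)}\cdots x_{\tau_{f}(r)}$ is a product of distinct free generators, apply $\psi_{X^{m}}$ term-by-term via the substitution principle from the remark after Lemma~3. This yields
\[
(\psi_{X^{m}}\circ\psi_{X^{n}})(x_{1}x_{2}\cdots x_{r})=\sum_{f,g}x_{(\tau_{f}\tau_{g})(1)}\cdots x_{(\tau_{f}\tau_{g})(r)},
\]
summed over $f:\{1,\ldots,r\}\to\{1,\ldots,n\}$ and $g:\{1,\ldots,r\}\to\{1,\ldots,m\}$.

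It remains to exhibit a bijection between pairs $(f,g)$ and functions $h:\{1,\ldots,r\}\to\{1,\ldots,mn\}$ with $\tau_{h}=\tau_{f}\tau_{g}$, which I expect to be the main obstacle. Identifying $\{1,\ldots,mn\}$ with $\{1,\ldots,m\}\times\{1,\ldots,n\}$ via $(a,j)\mapsto(a-1)n+j$, I would define $h$ from $(f,g)$ by $j=f$ and $a(i)=g(\tau_{f}^{-1}(i))$; inversely, from $(a,j)$ take $f=j$ and $g(k)=a(\tau_{f}(k))$. The verification that $\tau_{h}=\tau_{f}\tau_{g}$ goes as follows: stably sorting by $h$ orders $\{1,\ldots,r\}$ primarily by $a$ and secondarily by $j$, and this two-level sort can equivalently be carried out by first stably sorting by $j=f$ (producing the ordering $\tau_{f}(1),\ldots,\tau_{f}(r)$) and then stably re-sorting the resulting positions by the $a$-values attached to them, which are precisely $a(\tau_{f}(k))=g(k)$, producing $\tau_{g}$ on positions. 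Composing gives $\tau_{h}(\ell)=\tau_{f}(\tau_{g}(\ell))=(\tau_{f}\tau_{g})(\ell)$. The subtlety — and the reason this is the main obstacle — is the twist by $\tau_{f}$ in the definition of $a$: the naive identification $h(i)=(f(i)-1)m+g(i)$ does \emph{not} preserve the permutation, and one must measure $g$ along the $\tau_{f}$-sorted positions rather than along the original order.
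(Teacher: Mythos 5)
Your proof is correct, but it takes a genuinely different route from the paper's. The paper argues indirectly: it notes $\sigma_{0}$ and $\sigma_{1}$ agree, observes that the coefficients of $\sigma_{r}(\psi_{X^{m}})$ are non-negative integers summing to $m^{r}$, and then inducts on $r$, using the key fact from the proof of Lemma~5 (an element of $R$ whose $\sigma_{i}$ vanish for $i<r$ has $\sigma_{r}$ proportional to $\sum_{\pi\in S_{r}}\pi$) to conclude that the difference $\sigma_{r}(\psi_{X^{m}}\circ\psi_{X^{n}}-\psi_{X^{mn}})$ is a multiple of $\sum_{\pi}\pi$ whose coefficients sum to $(mn)^{r}-(mn)^{r}=0$, hence zero. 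You instead compute $\sigma_{r}(\psi_{X^{k}})$ explicitly as a sum of stable-sort permutations $\tau_{h}$ indexed by functions $h:\{1,\ldots,r\}\to\{1,\ldots,k\}$, and prove multiplicativity by an explicit radix-sort bijection $(f,g)\leftrightarrow h$ with $\tau_{h}=\tau_{f}\tau_{g}$; your identification is correct, including the essential twist $a(i)=g(\tau_{f}^{-1}(i))$, and your expansion of the multilinear component of $((1+x_{1})\cdots(1+x_{r}))^{k}$ is exactly right. What the paper's argument buys is brevity, since it recycles Lemma~5 and never needs to know which permutations occur; what yours buys is an explicit closed form for $\sigma_{r}(\psi_{X^{k}})$ and, as a by-product, a proof of the coefficient count $\sum\alpha_{\pi}=m^{r}$ that the paper dismisses as ``not hard to see.'' One small economy you could note: you do not really need Lemma~5 for the initial reduction, only the earlier remark that an element of $R$ is determined by its values on the products $x_{1}x_{2}\cdots x_{r}$.
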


\begin{proof}
It is easy to see that $\sigma_{0}(\psi_{X^{m}})=1$ and
$\sigma_{1}(\psi_{X^{m}})=m1_{S_1}$ for all $m$.
So 
$\sigma_{i}(\psi_{X^{m}}\circ\psi_{X^{n}})=\sigma_{i}(\psi_{X^{mn}}) $ for
$i=0,1$. Suppose by induction that we have shown that 
$\sigma_{i}(\psi_{X^{m}}\circ\psi_{X^{n}})=\sigma_{i}(\psi_{X^{mn}})$ for 
$i=0,1,\ldots,r-1$ for some $r>1$. Let%
\[
\sigma_{r}(\psi_{X^{m}})=\sum_{\pi\in S_{r}}\alpha_{\pi}\pi,\;\sigma_{r}%
(\psi_{X^{n}})=\sum_{\pi\in S_{r}}\beta_{\pi}\pi.
\]
Then the coefficients $\alpha_{\pi},\beta_{\pi}$ are non-negative integers,
and it is not hard to see that%
\[
\sum\alpha_{\pi}=m^{r},\;\sum\beta_{\pi}=n^{r}\text{.}
\]
So%
\[
\sigma_{r}(\psi_{X^{m}}\circ\psi_{X^{n}})=\sigma_{r}(\psi_{X^{m}})\sigma
_{r}(\psi_{X^{n}})=\sum_{\pi\in S_{r}}\gamma_{\pi}\pi
\]
for some non-negative integers $\gamma_{\pi}$ with $\sum\gamma_{\pi}=(mn)^{r}%
$. It follows that%
\[
\sigma_{r}(\psi_{X^{m}}\circ\psi_{X^{n}}-\psi_{X^{mn}})=\sum_{\pi\in S_{r}%
}\delta_{\pi}\pi
\]
for some integers $\delta_{\pi}$ with $\sum\delta_{\pi}=0$. But $\sigma
_{i}(\psi_{X^{m}}\circ\psi_{X^{n}}-\psi_{X^{mn}})=0$ for $i=0,1,\ldots,r-1$,
and as we saw in the proof of Lemma 5 this implies that%
\[
\sigma_{r}(\psi_{X^{m}}\circ\psi_{X^{n}}-\psi_{X^{mn}})=\alpha\sum_{\pi\in
S_{r}}\pi
\]
for some $\alpha\in\mathbb{Q}$. It follows that $\sigma_{r}(\psi_{X^{m}}%
\circ\psi_{X^{n}}-\psi_{X^{mn}})=0$, and so by induction we see that
$\sigma_{i}(\psi_{X^{m}}\circ\psi_{X^{n}})=\sigma_{i}(\psi_{X^{mn}})$ for all
$i\geq0$. So $\psi_{X^{m}}\circ\psi_{X^{n}}=\psi_{X^{mn}}$, as claimed.
\end{proof}

\begin{corollary}
The algebra $R$ is commutative.
\end{corollary}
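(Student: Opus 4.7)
The plan is to reduce commutativity of the whole algebra $R$ to the special identity $\psi_{X^m}\circ\psi_{X^n}=\psi_{X^{mn}}$ from Lemma 6, using Lemma 5 to parametrise $R$ and using the fact that $\sigma_r$ converts composition into multiplication in $\mathbb{Q}S_r$.

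First I would invoke Lemma 5 to write any two elements of $R$ as $\psi_v,\psi_w$ with $v,w\in\mathbb{Q}[[x]]$. Since a map in $R$ is determined by the sequence $(\sigma_0,\sigma_1,\sigma_2,\ldots)$, and since $\sigma_r$ turns composition into multiplication in $\mathbb{Q}S_r$, it is enough to verify that $\sigma_r(\psi_v)$ and $\sigma_r(\psi_w)$ commute in $\mathbb{Q}S_r$ for every $r\geq 0$.

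Next I would exploit the two features of the construction $w\mapsto\psi_w$ already noted in the text: it is $\mathbb{Q}$-linear in $w$, and $\psi_w$ agrees with $\psi_{w'}$ on elements of degree $\leq r$ whenever $w\equiv w'\pmod{x^{r+1}}$. In particular $\sigma_r(\psi_w)$ depends only on $w$ modulo $x^{r+1}$. Now $\{X^0,X^1,\ldots,X^r\}$ is a $\mathbb{Q}$-basis of the polynomials of degree $\leq r$ (the change-of-basis matrix from $\{1,x,\ldots,x^r\}$ is unipotent triangular), so one can write
\[
v\equiv\sum_{m=0}^{r}c_mX^m,\qquad w\equiv\sum_{n=0}^{r}d_nX^n\pmod{x^{r+1}},
\]
and hence
\[
\sigma_r(\psi_v)=\sum_{m=0}^{r}c_m\,\sigma_r(\psi_{X^m}),\qquad \sigma_r(\psi_w)=\sum_{n=0}^{r}d_n\,\sigma_r(\psi_{X^n}).
\]

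Finally, Lemma 6 gives $\sigma_r(\psi_{X^m})\sigma_r(\psi_{X^n})=\sigma_r(\psi_{X^{mn}})=\sigma_r(\psi_{X^{nm}})=\sigma_r(\psi_{X^n})\sigma_r(\psi_{X^m})$, so the elements $\sigma_r(\psi_{X^m})$, $m=0,1,\ldots,r$, pairwise commute in $\mathbb{Q}S_r$. Any two $\mathbb{Q}$-linear combinations of a commuting family commute, so $\sigma_r(\psi_v)\sigma_r(\psi_w)=\sigma_r(\psi_w)\sigma_r(\psi_v)$ for every $r$, whence $\psi_v\circ\psi_w=\psi_w\circ\psi_v$.

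I do not anticipate a real obstacle here: Lemmas 5 and 6 already do all the genuine work. The only point that needs care is the reduction to polynomials of bounded degree, which is justified by the $\psi_w=\psi_{w'}$ observation made in the text between Lemmas 3 and 4 and by $\mathbb{Q}$-linearity of $w\mapsto\psi_w$; once that is in place, the basis $\{X^m\}$ immediately lets Lemma 6 do its job.
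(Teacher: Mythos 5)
Your argument is correct and follows essentially the same route as the paper: both reduce commutativity to Lemma 6 via Lemma 5, using the change of basis between $\{x^m\}$ and $\{X^m\}$ on truncations of power series. You simply spell out more explicitly (via $\sigma_r$ and truncation modulo $x^{r+1}$) the linearity and degree considerations that the paper leaves implicit.
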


\begin{proof}
Lemma 6 implies that $\psi_{X^{m}}\circ\psi_{X^{n}}=\psi_{X^{n}}\circ
\psi_{X^{m}}$ for all $m,n\geq0$. We can express $x^{m}$ as a linear
combination of $1,X,X^{2},\ldots,X^{m}$, so that $\psi_{x^{m}}$ is a linear
combination of $\psi_{1},\psi_{X},\ldots,\psi_{X^{m}}$. So $\psi_{x^{m}}%
\circ\psi_{x^{n}}=\psi_{x^{n}}\circ\psi_{x^{m}}$ for all $m,n\geq0$, and $R$
is commutative.
\end{proof}

Now let%
\[
z=\log X=\log(1+x)=\sum_{n=1}^{\infty}(-1)^{n+1}\frac{x^{n}}{n}\in
\mathbb{Q}[[x]].
\]
Then%
\[
X=\text{e}^{z}=\sum_{n=0}^{\infty}\frac{z^{n}}{n!}
\]
and%
\[
X^{m}=\text{e}^{mz}=\sum_{n=0}^{\infty}\frac{m^{n}z^{n}}{n!}.
\]
It follows that%
\[
\psi_{X^{m}}=\sum_{n=0}^{\infty}\varepsilon_{n}m^{n}
\]
where%
\[
\varepsilon_{n}=\psi_{\frac{z^{n}}{n!}}\text{ for }n=0,1,\ldots.
\]
(There is no problem over convergence, since if $a\in A$ has degree $r$ then
$\varepsilon_{n}(a)=0$ for $n>r$.)

\begin{lemma}
The elements $\varepsilon_{n}$ ($n=0,1,2,\ldots$) are orthogonal idempotents.
That is, $\varepsilon_{r}\circ\varepsilon_{s}=0$ if $r\neq s$ and
$\varepsilon_{r}\circ\varepsilon_{r}=\varepsilon_{r}$ for $r,s=0,1,2,\ldots$.
\end{lemma}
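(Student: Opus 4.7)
The plan is to combine the expansion $\psi_{X^{m}}=\sum_{n=0}^{\infty}\varepsilon_{n}m^{n}$ with the multiplicativity $\psi_{X^{m}}\circ\psi_{X^{n}}=\psi_{X^{mn}}$ from Lemma 6, and then extract the orthogonality and idempotency by comparing coefficients of a two-variable polynomial identity.

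First I would fix a homogeneous element $a\in A$ of some degree $N$ and observe that $\varepsilon_{n}(a)=\psi_{z^{n}/n!}(a)=0$ whenever $n>N$, since $z^{n}/n!\in x^{n}\mathbb{Q}[[x]]$ has no terms of degree $\leq N$ (and by the remark following the definition of $\psi_{w}$, only the terms of $w$ of degree $\leq N$ contribute to $\psi_{w}(a)$). I would also note that each $\psi_{X^{k}}$ is degree-preserving, because the multilinear component of $((1+x_{1})\cdots(1+x_{r})-1)^{k}$ is homogeneous of degree exactly $r$. Consequently, applying both sides of the expansion of $\psi_{X^{m}}$ and of $\psi_{X^{n}}$ to $a$ yields the finite expressions
\[
(\psi_{X^{m}}\circ\psi_{X^{n}})(a)=\sum_{r,s=0}^{N}(\varepsilon_{r}\circ\varepsilon_{s})(a)\,m^{r}n^{s}
\qquad\text{and}\qquad
\psi_{X^{mn}}(a)=\sum_{t=0}^{N}\varepsilon_{t}(a)\,m^{t}n^{t}.
\]

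By Lemma 6 these two expressions are equal for every pair of non-negative integers $m,n$. Since both are polynomials in $\mathbb{Q}[m,n]$ of bounded degree and they agree on the infinite set $\mathbb{Z}_{\geq0}\times\mathbb{Z}_{\geq0}$, they are equal as polynomials. Comparing the coefficient of $m^{r}n^{s}$ then forces $(\varepsilon_{r}\circ\varepsilon_{s})(a)=0$ whenever $r\neq s$, and $(\varepsilon_{r}\circ\varepsilon_{r})(a)=\varepsilon_{r}(a)$. Because $a$ was an arbitrary homogeneous element and everything extends linearly to $A$, this gives $\varepsilon_{r}\circ\varepsilon_{s}=0$ for $r\neq s$ and $\varepsilon_{r}\circ\varepsilon_{r}=\varepsilon_{r}$.

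The main obstacle is really just the bookkeeping required to justify passing to a polynomial identity: one needs the finiteness $\varepsilon_{n}(a)=0$ for $n>\deg(a)$ together with the degree preservation of $\psi_{X^{n}}$ so that the sums in the displayed identities are genuinely finite and of degree $\leq N$ in each of $m$ and $n$. Once that finiteness is established, comparing coefficients is entirely routine.
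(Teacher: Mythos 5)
Your proof is correct and follows essentially the same route as the paper: both substitute the expansion $\psi_{X^{m}}=\sum_{n}\varepsilon_{n}m^{n}$ into the identity $\psi_{X^{m}}\circ\psi_{X^{n}}=\psi_{X^{mn}}$ of Lemma 6, truncate using $\varepsilon_{n}(a)=0$ for $n>\deg(a)$, and compare coefficients of $m^{r}n^{s}$ in the resulting polynomial identity. The only difference is cosmetic: you evaluate on a homogeneous element $a$, whereas the paper phrases the same comparison in terms of the group-ring components $\sigma_{k}$.
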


\begin{proof}
It is enough to show that if $k\geq0$ then%
\begin{equation}
\sigma_{k}(\varepsilon_{r}\circ\varepsilon_{s})=0\text{ if }r\neq s,
\end{equation}
and%
\begin{equation}
\sigma_{k}(\varepsilon_{r}\circ\varepsilon_{r})=\sigma_{k}(\varepsilon_{r}).
\end{equation}
Now $\sigma_{k}(\varepsilon_{r}\circ\varepsilon_{s})=\sigma_{k}(\varepsilon
_{r})\sigma_{k}(\varepsilon_{s})$ and $\sigma_{k}(\varepsilon_{r})=0$ if
$r>k$. So equations (1) and (2) hold if $r>k$ or $s>k$. As noted above,
$\psi_{X^{m}}=\sum_{r=0}^{\infty}\varepsilon_{r}m^{r}$, and so the equation
$\psi_{X^{m}}\circ\psi_{X^{n}}=\psi_{X^{mn}}$ gives%
\[
\sum_{r,s=0}^{k}\sigma_{k}(\varepsilon_{r})\sigma_{k}(\varepsilon_{s}%
)m^{r}n^{s}=\sum_{r=0}^{k}\sigma_{k}(\varepsilon_{r})(mn)^{r}.
\]
This equation holds for all $m,n$, and so equations (1) and (2) also hold when
$r,s\leq k$.
\end{proof}

We need the following number theoretic result.

\begin{lemma}
Let%
\[
z=\log(1+x)=\sum_{n=1}^{\infty}(-1)^{n+1}\frac{x^{n}}{n}
\]
as above, and let $p$ be a prime. For $0<i<p$ let $t_{i}$ be the power series
in $x$ given by%
\[
t_{i}=\sum_{r=0}^{\infty}\frac{1}{(i+r(p-1))!}z^{i+r(p-1)}.
\]
Then the denominators of the coefficients of $t_{i}$ are coprime to $p$.
\end{lemma}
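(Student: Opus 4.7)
The plan is to reinterpret $t_i$ as a $p$-adically integral combination of binomial series by using character orthogonality over the group of $(p-1)$-th roots of unity inside $\mathbb{Z}_p$. Since $p$ is coprime to $p-1$, Hensel's lemma applied to $Y^{p-1}-1$ produces $(p-1)$ distinct roots $\omega_1,\ldots,\omega_{p-1}\in\mathbb{Z}_p$ (the Teichm\"uller lifts of $1,2,\ldots,p-1\bmod p$), and the standard orthogonality
\[
\frac{1}{p-1}\sum_{j=1}^{p-1}\omega_j^{k}=\begin{cases}1 & \text{if } (p-1)\mid k,\\ 0 & \text{otherwise,}\end{cases}
\]
holds for every $k\in\mathbb{Z}$.

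Using $e^{\omega_j z}=(1+x)^{\omega_j}$ and expanding, I would obtain
\[
\frac{1}{p-1}\sum_{j=1}^{p-1}\omega_j^{-i}(1+x)^{\omega_j}
=\sum_{n=0}^{\infty}\frac{z^n}{n!}\cdot\frac{1}{p-1}\sum_{j=1}^{p-1}\omega_j^{n-i}
=\sum_{\substack{n\ge 0\\ n\equiv i\,(\mathrm{mod}\,p-1)}}\frac{z^n}{n!}.
\]
For $0<i<p-1$ the right-hand side is exactly $t_i$; for $i=p-1$ it equals $t_{p-1}+1$, the spurious extra term being $z^0/0!=1$ (since $n=0$ also satisfies $n\equiv 0\pmod{p-1}$). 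Thus for some $c_i\in\{0,-1\}$,
\[
t_i \;=\; \frac{1}{p-1}\sum_{j=1}^{p-1}\omega_j^{-i}(1+x)^{\omega_j} \;+\; c_i.
\]

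To finish, I invoke the standard fact that for any $\alpha\in\mathbb{Z}_p$ the binomial coefficient $\binom{\alpha}{n}=\alpha(\alpha-1)\cdots(\alpha-n+1)/n!$ lies in $\mathbb{Z}_p$: the polynomial $\binom{X}{n}$ is $\mathbb{Z}$-valued on $\mathbb{Z}$, and by $p$-adic continuity it is $\mathbb{Z}_p$-valued on $\mathbb{Z}_p$. Hence each $(1+x)^{\omega_j}=\sum_{n\ge 0}\binom{\omega_j}{n}x^n$ lies in $\mathbb{Z}_p[[x]]$. Since $\frac{1}{p-1}$ and each $\omega_j^{-i}$ are already units in $\mathbb{Z}_p$, and $c_i\in\mathbb{Z}$, the displayed formula exhibits $t_i$ as an element of $\mathbb{Z}_p[[x]]$. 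But $t_i\in\mathbb{Q}[[x]]$, so each coefficient is a rational $p$-adic integer, i.e.\ its denominator is coprime to $p$.

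The only real content is the $p$-integrality of the generalized binomial coefficients $\binom{\alpha}{n}$ for $\alpha\in\mathbb{Z}_p$; once that standard fact is in hand, the rest is a clean character-orthogonality identity. I expect this to be the only non-routine step: a purely rational approach would require delicate control of the $p$-adic valuations of the coefficients of $(\log(1+x))^n/n!$ (essentially Stirling numbers of the first kind), which the $p$-adic viewpoint sidesteps.
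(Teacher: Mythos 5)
Your proof is correct. Every step checks out: the Teichm\"uller lifts $\omega_1,\dots,\omega_{p-1}$ exist by Hensel's lemma applied to $Y^{p-1}-1$; the orthogonality relation is the standard sum over a cyclic group of roots of unity and holds for all integers $k$ (including the negative exponents $-i$); the identity $e^{\alpha z}=(1+x)^{\alpha}=\sum_n\binom{\alpha}{n}x^n$ is a formal power-series identity whose $x^n$-coefficients are polynomials in $\alpha$ agreeing on all positive integers, hence on all of $\mathbb{Z}_p$; the bookkeeping of the spurious $n=0$ term when $i=p-1$ is right; and $\binom{\alpha}{n}\in\mathbb{Z}_p$ for $\alpha\in\mathbb{Z}_p$ follows from density of $\mathbb{Z}$ in $\mathbb{Z}_p$ as you say. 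Finally $\mathbb{Q}\cap\mathbb{Z}_p=\mathbb{Z}_{(p)}$ gives the conclusion.

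Your route is recognizably the $p$-adic completion of the paper's argument, but the execution is genuinely different at every step, so a comparison is worthwhile. The paper never leaves $\mathbb{Q}$: it fixes one coefficient $\alpha_i$ of $x^n$, lets $p^N$ bound the denominators involved, and uses Hensel's lemma only to produce an ordinary \emph{integer} $k$ that is a primitive root mod $p$ with $k^{p-1}\equiv 1\bmod p^N$ --- a finite approximation to your Teichm\"uller unit. It then reads off $\binom{k}{n}=\sum_m k^m\beta_m$ (needing only that integer binomial coefficients are integers), replaces $k^{i+r(p-1)}$ by $k^i$ modulo $p^N$, and inverts a Vandermonde system in the powers $k,k^2,\dots,k^{p-1}$ whose determinant is prime to $p$; that Vandermonde inversion is exactly the finite avatar of your character orthogonality. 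What your version buys is conceptual economy: the truncation parameters $N$ and $R$ and the coefficient-by-coefficient analysis disappear, and one clean identity expresses $t_i$ globally as a $\mathbb{Z}_p[[x]]$ element. The price is the (standard, but not free) fact that $\binom{\alpha}{n}\in\mathbb{Z}_p$ for $p$-adic $\alpha$, whereas the paper's argument is self-contained at the level of elementary number theory and needs only integrality of ordinary binomial coefficients. Both are complete proofs; yours would shorten the paper's Section 2, the paper's keeps the prerequisites minimal.
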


\begin{proof}
The constant term of $t_{i}$ is 0 for all $i=1,2,\ldots,p-1$. Let $n\geq1$ and
let $\alpha_{i}$ be the coefficient of $x^{n}$ in $t_{i}$ ($0<i<p$). (We fix
$n$ for the moment.) We want to show that $\alpha_{i}\in\mathbb{Z}_{(p)}$, the
ring of rationals with denominators coprime to $p$. For $m=1,2,\ldots$ let
$\beta_{m}$ be the coefficient of $x^{n}$ in $\frac{1}{m!}z^{m}$. Then
$\beta_{m}=0$ for $m>n$, and so if we pick $R$ such that $1+R(p-1)\geq n$ then%
\[
\alpha_{i}=\sum_{r=0}^{R}\beta_{i+r(p-1)}
\]
for $0<i<p$. Let $k$ be a positive integer, so that e$^{kz}=(1+x)^{k}$, and
pick out the coefficient of $x^{n}$ on both sides of the equation%
\[
(1+x)^{k}=\sum_{m=0}^{\infty}\frac{1}{m!}(kz)^{m}.
\]
We obtain the equation%
\[
\binom{k}{n}=\sum_{m=1}^{n}k^{m}\beta_{m}=\sum_{i=1}^{p-1}\sum_{r=0}%
^{R}k^{i+r(p-1)}\beta_{i+r(p-1)}.
\]
Now let $N$ be the highest power of $p$ which divides the denominator of any
of $\beta_{1},\beta_{2},\ldots,\beta_{n}$. By Hensel's lemma we can find an
integer $k$ such that $k+p\mathbb{Z}$ is a primitive element in the finite
field $\mathbb{Z}/p\mathbb{Z}$, and such that $k^{p-1}=1\operatorname{mod}%
p^{N}$. If $r>0$, then $k^{i+r(p-1)}-k^{i}$ is divisible by $p^{N}$ (for any
$i$) so that $k^{i+r(p-1)}\beta_{i+r(p-1)}-k^{i}\beta_{i+r(p-1)}\in
\mathbb{Z}_{(p)}$. It follows that%
\[
\sum_{i=1}^{p-1}k^{i}\alpha_{i}=\sum_{i=1}^{p-1}\sum_{r=0}^{R}k^{i}%
\beta_{i+r(p-1)}\in\mathbb{Z}_{(p)}.
\]
If we let $k_{j}=k^{j}$ for $j=1,2,\ldots,p-1$ then we obtain $p-1$ equations%
\[
\sum_{i=1}^{p-1}k_{j}^{i}\alpha_{i}\in\mathbb{Z}_{(p)},
\]
and since the Vandermonde determinant $%
{\displaystyle\prod\limits_{1\leq i<j\leq p-1}}
(k_{i}-k_{j})$ is coprime to $p$ this implies that $\alpha_{i}\in
\mathbb{Z}_{(p)}$ for $i=1,2,\ldots,p-1$.
\end{proof}

\begin{corollary}
Let%
\[
\eta=\psi_{t_{1}}=\sum_{r=0}^{\infty}\varepsilon_{1+r(p-1)}.
\]
Then $\sigma_{0}(\eta)=0$, $\sigma_{1}(\eta)=1_{S_{1}}$. If $n\geq1$, and if
we write $\sigma_{n}(\eta)=\sum_{\pi\in S_{n}}\alpha_{\pi}\pi$, then the
coefficients $\alpha_{\pi}$ have denominators which are coprime to $p$.
Furthermore, if $n\neq1\operatorname{mod}(p-1)$ then $\sum_{\pi\in S_{n}%
}\alpha_{\pi}=0$.
\end{corollary}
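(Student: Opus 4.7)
The plan is to verify the four claims in order. The decomposition $\eta = \sum_{r\geq 0}\varepsilon_{1+r(p-1)}$ follows immediately from $t_1 = \sum_{r\geq 0} z^{1+r(p-1)}/(1+r(p-1))!$, the definition $\varepsilon_n = \psi_{z^n/n!}$, and the linearity of $w \mapsto \psi_w$ (valid termwise because on any degree-$n$ element of $A$ only finitely many summands contribute). Since $t_1$ has no constant term, $\sigma_0(\eta) = \eta(1) = t_1(0) = 0$. For $\sigma_1(\eta)$: $\eta(x_1)$ is the degree-$1$ component of $t_1(x_1)$, and since $t_1 = z + z^p/p! + \cdots$ with $z = x - x^2/2 + \cdots$, the coefficient of $x$ in $t_1$ is $1$, giving $\sigma_1(\eta) = 1_{S_1}$.

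For the denominator claim I reuse the formula established inside the proof of Lemma 4,
\[
\psi_{x^{m}}(x_1 x_2 \cdots x_n) = \sum x_{S_1} x_{S_2} \cdots x_{S_m},
\]
the sum running over ordered partitions of $\{1,\ldots,n\}$ into $m$ non-empty subsets $S_1,\ldots,S_m$. Each summand is a single word $x_{\pi 1}\cdots x_{\pi n}$ for a unique $\pi\in S_n$, so when expanded in the basis $\{x_{\pi 1}\cdots x_{\pi n}:\pi\in S_n\}$ the coefficients of $\psi_{x^m}(x_1\cdots x_n)$, and hence of $\sigma_n(\psi_{x^m})$, are non-negative integers. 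Writing $t_1 = \sum_m \beta_m x^m$, Lemma 9 gives $\beta_m \in \mathbb{Z}_{(p)}$, so $\sigma_n(\eta) = \sum_m \beta_m\,\sigma_n(\psi_{x^m})$ has coefficients in $\mathbb{Z}_{(p)}$.

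The vanishing of $\sum_\pi \alpha_\pi$ when $n\not\equiv 1\pmod{p-1}$ is the main point. Applying the same recipe to $X^m = (1+x)^m$ in place of $x^m$ yields
\[
\psi_{X^{m}}(x_1 x_2 \cdots x_n) = \sum x_{S_1} x_{S_2} \cdots x_{S_m},
\]
but now the $S_i$ are allowed to be empty, so the index set is in bijection with functions $[n] \to [m]$. The sum of coefficients of $\sigma_n(\psi_{X^m})$ is therefore exactly $m^n$. On the other hand $\psi_{X^m} = \sum_k \varepsilon_k m^k$, and $\varepsilon_k$ annihilates the degree-$n$ part of $A$ when $k > n$, so writing $T_k$ for the sum of coefficients of $\sigma_n(\varepsilon_k)$ we obtain the polynomial identity
\[
\sum_{k=0}^{n} T_k\, m^k \;=\; m^n \qquad \text{for all integers } m \geq 0.
\]
Comparing coefficients in $m$ forces $T_k = \delta_{n,k}$. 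Therefore
\[
\sum_{\pi\in S_n}\alpha_\pi \;=\; \sum_{r\geq 0} T_{1+r(p-1)}
\]
is $1$ when $n\equiv 1\pmod{p-1}$ and $0$ otherwise, as required.

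The only non-routine step is the counting interpretation giving $m^n$ for the sum of coefficients of $\sigma_n(\psi_{X^m})$; once this is in hand, the orthogonality $T_k = \delta_{n,k}$ and the desired vanishing fall out by extracting coefficients from a polynomial identity. Everything else is a direct application of the earlier lemmas.
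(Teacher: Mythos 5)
Your proof is correct. The first three claims are handled essentially as in the paper: the values of $\sigma_0(\eta)$ and $\sigma_1(\eta)$ are read off from the first two coefficients of $t_1$, and the denominator claim combines Lemma 9 with the (correct, and worth making explicit as you do) observation that each $\sigma_n(\psi_{x^m})$ has non-negative integer coefficients, coming from the ordered-partition expansion in the proof of Lemma 4. Where you genuinely diverge is in the final claim. The paper argues via the orthogonality of the idempotents (Lemma 8): since $n\neq 1\operatorname{mod}(p-1)$ one has $\varepsilon_n\circ\eta=0$, and multiplying $\sigma_n(\varepsilon_n)=\frac{1}{n!}\sum_{\pi}\pi$ against $\sigma_n(\eta)$ forces $\sum_\pi\alpha_\pi=0$. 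You instead apply the augmentation $\sum a_\pi\pi\mapsto\sum a_\pi$ directly to the expansion $\psi_{X^m}=\sum_k\varepsilon_k m^k$, using the counting fact that the augmentation of $\sigma_n(\psi_{X^m})$ equals $m^n$ (the same fact the paper uses inside the proof of Lemma 6), to get the polynomial identity $\sum_k T_k m^k=m^n$ and hence $T_k=\delta_{n,k}$. This bypasses Lemma 6 and Lemma 8 entirely and is in that sense more elementary and self-contained; the paper's route, by contrast, packages the same underlying computation into the idempotent decomposition, which it needs anyway for the rest of the argument. Both are valid; yours trades reuse of the developed machinery for a shorter logical chain to this particular conclusion.
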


\begin{proof}
The constant term of $t_{1}$ is 0, and the coefficient of $x$ is 1. So
$\sigma_{0}(\eta)=0$ and $\sigma_{1}(\eta)=1_{S_{1}}$. Let $n\geq1$ and let
$\sigma_{n}(\eta)=\sum_{\pi\in S_{n}}\alpha_{\pi}\pi$. Then Lemma 9 implies
that the coefficients $\alpha_{\pi}$ have denominators which are coprime to
$p$. Finally, suppose that $n\neq1\operatorname{mod}(p-1)$. Then
$\varepsilon_{n}\circ\eta=0$ since the elements $\varepsilon_{r}$
($r=0,1,2,\ldots$) are orthogonal idempotents. It is easy to see that
$\sigma_{n}(\varepsilon_{n})=\frac{1}{n!}\sum_{\pi\in S_{n}}\pi$ and so%
\[
\left(  \frac{1}{n!}\sum_{\pi\in S_{n}}\pi\right)  \left(  \sum_{\pi\in S_{n}%
}\alpha_{\pi}\pi\right)  =\sigma_{n}(\varepsilon_{n})\sigma_{n}(\eta
)=\sigma_{n}(\varepsilon_{n}\circ\eta)=0.
\]
This implies that $\sum_{\pi\in S_{n}}\alpha_{\pi}\pi=0$.
\end{proof}

\section{The relators $K_{n}$ ($n=1,2,\ldots$)}

If $a\in L$ and if $x_{i}x_{j}\ldots x_{k}$ is a product of the free
generators of $A$ let $[a\,|\,x_{i}x_{j}\ldots x_{k}]=[a,x_{i},x_{j}%
,\ldots,x_{k}]\in L$. If $b=\sum\beta_{i}m_{i}\in A$ is a linear combination
of products $m_{i}$ then let%
\[
\lbrack a\,|\,b]=\sum\beta_{i}[a\,|\,m_{i}].
\]
Then $K_{1}(x_{1})=qx_{1}$, and if $n>0$ then using Wall's notation we have%
\[
K_{n+1}(x_{1},x_{2},\ldots,x_{n+1})=\sum_{r=2}^{q}\binom{q}{r}[x_{n+1}%
\,|\,\psi_{x^{r-1}}(x_{1}x_{2}\ldots x_{n})].
\]
If $G$ is any group of prime-power exponent $q$ then the associated Lie ring
of $G$ satisfies the relations $K_{n}=0$ ($n=1,2,\ldots$). Furthermore, if
$f=0$ is any multilinear Lie relation satisfied by the associated Lie ring of
every group of exponent $q$, then $f=0$ is a consequence of the identities
$K_{n}=0$ ($n=1,2,\ldots$). This is Theorem 2.4.7 and Theorem 2.5.1 of
\cite{vlee93b}.

Let $L_{\mathbb{Z}}$ be the Lie subring of $L$ generated by $x_{1}%
,x_{2},\ldots$. Then $L_{\mathbb{Z}}$ consists of linear combinations%
\[
\sum\alpha_{i}[x_{i1},x_{i2},\ldots,x_{in_{i}}]
\]
of Lie products of the generators $x_{1},x_{2},\ldots$ where the coefficients
$\alpha_{i}$ are integers. So $K_{n}\in L_{\mathbb{Z}}$ for all $n$. For
$n\geq1$ let $I_{n}$ be the Lie ideal of $L_{\mathbb{Z}}$ generated by
elements $K_{m}(a_{1},a_{2},\ldots,a_{m})$ with $m\leq n$ and $a_{1}%
,a_{2},\ldots,a_{m}\in L_{\mathbb{Z}}$. So Theorem 1 says that if
$n\neq1\operatorname{mod}(p-1)$ then $K_{n}\in I_{n-1}$. Note that $I_{n}$ is
actually spanned by the elements $K_{m}(a_{1},a_{2},\ldots,a_{m})$, since%
\[
\lbrack K_{m}(a_{1},a_{2},\ldots,a_{m}),b]=\sum_{i=1}^{m}K_{m}(a_{1}%
,\ldots,[a_{i},b],\ldots,a_{m}).
\]

We define a linear map $\delta:A\rightarrow L$ by setting $\delta(1)=0$, and
setting%
\[
\delta(x_{i}x_{j}\ldots x_{k})=\left\{
\begin{tabular}
[c]{l}%
$\lbrack x_{i},x_{j},\ldots,x_{k}]$ if $i=1,$\\
$0$ if $i\neq1.$%
\end{tabular}
\right.
\]
The map $\delta$ has the useful property that if $a\in L$ is multilinear in
$x_{1},x_{2},\ldots,x_{r}$ then $\delta(a)=a$. (See [5, page 47].)

\begin{lemma}
$\delta(\psi_{X^{q}}(x_{1}))=\psi_{X^{q}}(x_{1})=qx_{1}=K_{1}(x_{1})$, and if
$n\geq2$ 
\[
\delta(\psi_{X^{q}}(x_{1}x_{2}\ldots x_{n}))=K_{n}(x_{2},\ldots,x_{n},x_{1})\operatorname{mod}I_{n-1}.
\]
\end{lemma}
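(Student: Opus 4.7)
The plan is to compute $\delta(\psi_{X^q}(x_1 x_2 \cdots x_n))$ directly, using the binomial expansion $X^q = (1+x)^q$ together with the combinatorial description of $\psi_{x^r}$ from the proof of Lemma 4. Since $\psi_1$ annihilates $x_1 \cdots x_n$ for $n \geq 1$,
$$\psi_{X^q}(x_1 x_2 \cdots x_n) = \sum_{r=1}^{q} \binom{q}{r} \sum x_{S_1} x_{S_2} \cdots x_{S_r},$$
the inner sum running over all ordered partitions $(S_1, \ldots, S_r)$ of $\{1, 2, \ldots, n\}$ into $r$ non-empty blocks. After applying $\delta$, only those partitions with $1 \in S_1$ survive, and each such term becomes a left-normed bracket beginning with $x_1$, followed by the remaining entries of $S_1$ in increasing order and then the entries of $S_2, \ldots, S_r$ in order. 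I will organize the surviving sum by the value of $k := |S_1| - 1$.

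For $k = 0$ (so $S_1 = \{1\}$) the blocks $S_2, \ldots, S_r$ form an ordered partition of $\{2, \ldots, n\}$ into $r - 1$ non-empty subsets, and summing $[x_1 \,|\, x_{S_2} \cdots x_{S_r}]$ over such partitions gives $[x_1 \,|\, \psi_{x^{r-1}}(x_2 \cdots x_n)]$; summing further over $r = 2, \ldots, q$ reproduces exactly $K_n(x_2, \ldots, x_n, x_1)$ by the definition of $K_n$. For $k \geq 1$, fix $T = S_1 \setminus \{1\} = \{j_1 < \cdots < j_k\} \subseteq \{2, \ldots, n\}$, write $T^c = \{i_1 < \cdots < i_m\}$ with $m = n - 1 - k$, and set $y = [x_1, x_{j_1}, \ldots, x_{j_k}] \in L_{\mathbb{Z}}$. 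When $m \geq 1$ the contribution of partitions with $S_1 = \{1\} \cup T$ is
$$\sum_{r=2}^{q} \binom{q}{r} \bigl[y \,\big|\, \psi_{x^{r-1}}(x_{i_1} \cdots x_{i_m})\bigr] = K_{m+1}(x_{i_1}, \ldots, x_{i_m}, y) \in I_{m+1};$$
when $m = 0$ only the $r = 1$ term exists and gives $q[x_1, \ldots, x_n] = K_1([x_1, \ldots, x_n]) \in I_1$. Since $m + 1 = n - k \leq n - 1$ in both cases, every $k \geq 1$ contribution lies in $I_{n-1}$, and summing over all such $T$ yields the required congruence. The $n = 1$ statement is immediate: $\psi_{X^q}(x_1) = q x_1$ by $\sigma_1(\psi_{X^q}) = q \cdot 1_{S_1}$ from the proof of Lemma 6, and $\delta$ fixes multilinear elements.

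The only genuine obstacle is the combinatorial reindexing in the $k \geq 1$ case: one must recognize that grouping the surviving terms by the set $T$ packages the residual sum exactly as the defining expression of a smaller relator $K_{m+1}$ applied to the Lie monomial $y$ together with the remaining generators $x_{i_1}, \ldots, x_{i_m}$. Once this identification is made, membership in $I_{n-1}$ is visible, and nothing further is needed.
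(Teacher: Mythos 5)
Your proof is correct and takes essentially the same route as the paper's: expand $\psi_{X^{q}}$ binomially, decompose each $\psi_{x^{r}}(x_{1}x_{2}\ldots x_{n})$ over ordered partitions, note that after applying $\delta$ only the partitions with $1\in S_{1}$ survive, and identify the $S_{1}=\{1\}$ contribution as $K_{n}(x_{2},\ldots,x_{n},x_{1})$ while the terms with $|S_{1}|>1$ assemble into values $K_{n+1-|S_{1}|}(\ldots,\delta(x_{S_{1}}))$ of smaller relators lying in $I_{n-1}$. The only difference is bookkeeping: you index the surviving terms by $T=S_{1}\setminus\{1\}$ and fold the $r=1$ term into the $m=0$ case, whereas the paper separates out $q[x_{1},\ldots,x_{n}]$ at the start and sums over $S=S_{1}$.
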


\begin{proof}
The first claim in the lemma is straighforward, and so we let $n\geq2$.%
\[
\psi_{X^{q}}=\sum_{r=0}^{q}\binom{q}{r}\psi_{x^{r}}
\]
and so%
\[
\psi_{X^{q}}(x_{1}x_{2}\ldots x_{n})=qx_{1}x_{2}\ldots x_{n}+\sum_{r=2}%
^{q}\binom{q}{r}\psi_{x^{r}}(x_{1}x_{2}\ldots x_{n}).
\]
If $r\geq2$, then using the notation introduced in the proof of Lemma 4 we
have%
\[
\psi_{x^{r}}(x_{1}x_{2}\ldots x_{n})=\sum x_{S_{1}}x_{S_{2}}\ldots x_{S_{r}}
\]
where the sum is taken over all partitions of $\{1,2,\ldots,n\}$ into an
ordered sequence of non-empty subsets $S_{1},S_{2},\ldots,S_{r}$. For each
non-empty subset $S\subset\{1,2,\ldots,n\}$ we gather together the summands
where $S_{1}=S$, and obtain%
\[
\psi_{x^{r}}(x_{1}x_{2}\ldots x_{n})=\sum_{S}\left(  \sum_{S_{1}=S}x_{S_{1}%
}x_{S_{2}}\ldots x_{S_{r}}\right)  =\sum_{S}x_{S}\psi_{x^{r-1}}%
(x_{\{1,2,\ldots,n\}\backslash S}).
\]
So%
\begin{align*}
& \delta(\psi_{X^{q}}(x_{1}x_{2}\ldots x_{n}))\\
& =q[x_{1},x_{2},\ldots,x_{n}]+\sum_{S}\left(  \sum_{r=2}^{q}\binom{q}%
{r}[\delta(x_{S})\,|\,\psi_{x^{r-1}}(x_{\{1,2,\ldots,n\}\backslash
S})]\right)  .
\end{align*}
If $S$ is a non-empty subset of $\{1,2,\ldots,n\}$, and if we write
$\{1,2,\ldots,n\}\backslash S=\{i,j,\ldots,k\}$ with $i<j<\ldots<k$ then%
\[
\sum_{r=2}^{q}\binom{q}{r}[\delta(x_{S})\,|\,\psi_{x^{r-1}%
}(x_{\{1,2,\ldots,n\}\backslash S})]=K_{n+1-|S|}(x_{i},x_{j}%
,\ldots,x_{k},\delta(x_{S})).
\]
But $\delta(x_{S})=0$ unless $1\in S$, and $K_{n+1-|S|}(x_{i},x_{j}%
,\ldots,x_{k},\delta(x_{S}))\in I_{n-1}$ if $|S|>1$. Clearly $q[x_{1}%
,x_{2},\ldots,x_{n}]\in I_{n-1}$ and so%
\[
\delta(\psi_{X^{q}}(x_{1}x_{2}\ldots x_{n}))=K_{n}(x_{2},\ldots,x_{n}%
,x_{1})\operatorname{mod}I_{n-1}.
\]

\end{proof}

We now define $\Gamma_{n}$ to be the subring of $A$ generated by elements
$K_{m}(a_{1},a_{2},\ldots,a_{m})$ with $m\leq n$ and $a_{1},a_{2},\ldots
,a_{m}\in L_{\mathbb{Z}}$. So $\Gamma_{n}$ consists of integral linear
combinations of products of elements $K_{m}(a_{1},a_{2},\ldots,a_{m})$ ($m\leq
n$). (Wall's definition of $\Gamma_{n}$ is slightly different from this, in
that he allows coefficients in the ring of rationals with denominators which
are coprime to $p$. Wall also indexes $\Gamma_{n}$ differently.)

\begin{lemma}
$\psi_{X^{q}}(x_{1}x_{2}\ldots x_{n})=K_{n}(x_{2},\ldots,x_{n},x_{1}%
)\operatorname{mod}\Gamma_{n-1}$.
\end{lemma}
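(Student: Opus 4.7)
The plan is to reduce the claim, via Lemma 11, to a statement about the non-Lie part of $\psi_{X^q}(x_1 x_2 \ldots x_n)$. Lemma 11 gives $\delta(\psi_{X^q}(x_1 x_2 \ldots x_n)) \equiv K_n(x_2,\ldots,x_n,x_1) \pmod{I_{n-1}}$. First I would verify that $I_{n-1} \subseteq \Gamma_{n-1}$: by the identity $[K_m(a_1,\ldots,a_m),b]=\sum_i K_m(a_1,\ldots,[a_i,b],\ldots,a_m)$ noted in the paper, the Lie ideal $I_{n-1}$ is additively spanned by the elements $K_m(a_1,\ldots,a_m)$ with $m \leq n-1$, and these already lie in the subring $\Gamma_{n-1}$ by definition. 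Hence the congruence from Lemma 11 holds modulo $\Gamma_{n-1}$ as well, and Lemma 12 reduces to proving
\[
\psi_{X^q}(x_1 x_2 \ldots x_n) \equiv \delta(\psi_{X^q}(x_1 x_2 \ldots x_n)) \pmod{\Gamma_{n-1}}.
\]

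To establish this residual congruence I would re-use the expansion from the proof of Lemma 11, namely
\[
\psi_{X^q}(x_1 \ldots x_n) = q\, x_1 x_2 \ldots x_n + \sum_{\emptyset \neq S \subsetneq \{1,\ldots,n\}} x_S\, H(T),
\]
with $H(T) = \sum_{r=2}^q \binom{q}{r}\psi_{x^{r-1}}(x_T)$ and $T = \{1,\ldots,n\}\setminus S$, and subtract the corresponding expansion of $\delta(\psi_{X^q}(x_1\ldots x_n)) = q[x_1,\ldots,x_n] + \sum_{S \ni 1,\,S\neq\{1,\ldots,n\}} [\delta(x_S)\,|\,H(T)]$. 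The difference decomposes into three kinds of contributions: (a) the correction $q(x_1 x_2\ldots x_n - [x_1,\ldots,x_n])$; (b) for $S \ni 1$ with $|S| \geq 2$, the differences $x_S H(T) - [\delta(x_S)\,|\,H(T)]$; and (c) for $S$ with $1 \notin S$, the terms $x_S H(T)$ (since then $\delta(x_S)=0$). In each case I would apply Lemma 2 iteratively to straighten every associative monomial into its left-normed Lie bracket plus $\mathbb{Z}$-combinations of brackets of lower total weight, and then use the factor $q$ in (a) or the coefficients $\binom{q}{r}$ (with $r\geq 2$) hidden inside $H(T)$ in (b) and (c) to recognise each summand as a $\mathbb{Z}$-linear combination of associative products of terms of the form $K_1(a)=qa$ together with lower-weight $K_m(a_1,\ldots,a_m)$'s ($m<n$), all of which belong to $\Gamma_{n-1}$ by definition.

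The hard part will be the combinatorial bookkeeping in stages (b) and (c): I must verify that after all the straightenings, the rational denominators appearing at intermediate steps (for example the $1/r!$ that arises when $\psi_{x^{r-1}}$ is decomposed via the orthogonal idempotents $\varepsilon_{r'}$ of Lemma 8) cancel in the final combination, so that the residue really is an integer element of $\Gamma_{n-1}$. I would attempt this by an induction on $n$, using Lemma 12 for smaller indices applied to the sub-products $x_{T'}$ occurring inside $H(T)$; each intermediate rewriting then either lands in $\Gamma_{n-1}$ directly (by the inductive hypothesis) or is recognised as an associative product of $K_m$-elements with $m<n$, so that the residue of $\psi_{X^q}(x_1\ldots x_n) - K_n(x_2,\ldots,x_n,x_1)$ in $A/\Gamma_{n-1}$ vanishes as required.
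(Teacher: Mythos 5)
Your reduction is the right one, and it matches the endgame of the paper's argument: since $\delta$ fixes multilinear Lie elements and $I_{n-1}\subseteq\Gamma_{n-1}$, Lemma 11 does reduce the lemma to the single congruence $\psi_{X^{q}}(x_{1}x_{2}\ldots x_{n})\equiv\delta(\psi_{X^{q}}(x_{1}x_{2}\ldots x_{n}))\operatorname{mod}\Gamma_{n-1}$, i.e.\ to showing that $\psi_{X^{q}}(x_{1}x_{2}\ldots x_{n})$ agrees modulo $\Gamma_{n-1}$ with a multilinear Lie element. But your plan for proving that congruence has a genuine gap, and I do not believe the term-by-term straightening you describe can close it. Already piece (a) fails: $q\,x_{1}x_{2}\ldots x_{n}$ is \emph{not} in $\Gamma_{n-1}$, because $\Gamma_{n-1}$ consists of integral combinations of products in which \emph{every} factor is a value of some $K_{m}$, and the bare generators $x_{2},\ldots,x_{n}$ are not such values; a single factor of $q$ is not enough. (Compare the case $n=2$ in the paper, where one needs the coefficient $q^{2}$ on $x_{1}x_{2}$ precisely so that it factors as $(qx_{1})(qx_{2})$; the multilinear part of $\Gamma_{1}$ is $q\mathbb{Z}[x_{1},x_{2}]+q^{2}\mathbb{Z}x_{1}x_{2}+q^{2}\mathbb{Z}x_{2}x_{1}$, which does not contain $qx_{1}x_{2}$.) Likewise in (c) the terms $x_{S}H(T)$ with $1\notin S$ contain the bare factor $x_{S}$ and are not individually in $\Gamma_{n-1}$. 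The divisibility you need is a global cancellation among (a), (b) and (c), not a property of the individual summands, and your sketch offers no mechanism that produces it; the $1/r!$ denominators from the idempotents of Lemma 8 are a red herring, since Lemma 8 plays no role here.

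The missing idea is the Baker--Campbell--Hausdorff formula. The paper passes to the completion $\widehat{A}$, sets $w=(\mathrm{e}^{x_{1}}\mathrm{e}^{x_{2}}\ldots\mathrm{e}^{x_{n}})^{q}$, and ``smooths'' $w$ to a word $w_{n}$ (a product of $2^{n}$ conjugate-free factors $(\delta_{i}\delta_{j}\ldots\delta_{k}(w))^{\pm1}$) whose homogeneous components of degree less than $n$ vanish; BCH then guarantees that the degree-$n$, necessarily multilinear, component of $w_{n}-1$ is a genuine Lie element $z_{n}$. On the other hand, expanding each of the $2^{n}$ factors and invoking the inductive hypothesis (which gives $\psi_{X^{q}}(x_{i}x_{j}\ldots x_{k})\in\Gamma_{m}$ for every proper subset of size $m<n$) shows that this same multilinear component equals $\psi_{X^{q}}(x_{1}x_{2}\ldots x_{n})$ modulo $\Gamma_{n-1}$. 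It is exactly this group-theoretic input that factors the non-Lie part of $\psi_{X^{q}}(x_{1}x_{2}\ldots x_{n})$ into products of $K_{m}$-values; without it, or some equivalent integrality statement proved from scratch, the ``combinatorial bookkeeping'' you defer to in your final paragraph cannot be completed.
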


\begin{proof}
First note that $\psi_{X^{q}}(x_{1})=qx_{1}=K_{1}(x_{1})$. Next note that
$K_{2}(x_{2},x_{1})=\binom{q}{2}[x_{1},x_{2}]$ and that%
\begin{align*}
&  \psi_{X^{q}}(x_{1}x_{2})\\
&  =\binom{q+1}{2}x_{1}x_{2}+\binom{q}{2}x_{2}x_{1}\\
&  =\binom{q+1}{2}[x_{1},x_{2}]+q^{2}x_{1}x_{2}\\
&  =\binom{q}{2}[x_{1},x_{2}]+q[x_{1},x_{2}]+q^{2}x_{1}x_{2}\\
&  =K_{2}(x_{2},x_{1})\operatorname{mod}\Gamma_{1}%
\end{align*}
since $q^{2}x_{1}x_{2}=(qx_{1})(qx_{2})\in\Gamma_{1}$ and $q[x_{1},x_{2}%
]\in\Gamma_{1}$.

We establish Lemma 12 for general $n$ by induction on $n$. So suppose that
$n>2$ and that%
\[
\psi_{X^{q}}(x_{1}x_{2}\ldots x_{m})=K_{m}(x_{2},\ldots,x_{m},x_{1}%
)\operatorname{mod}\Gamma_{m-1}%
\]
for $m<n$. Note that this implies that $\psi_{X^{q}}(x_{1}x_{2}\ldots
x_{m})\in\Gamma_{m}$ for $m<n$.

We extend $A$ to the ring $\widehat{A}$ of formal power series consisting of
formal sums%
\[
\sum_{r=0}^{\infty}u_{r}%
\]
where $u_{r}$ is a homogeneous element of degree $r$ in $A$. If $a\in
\widehat{A}$ has zero constant term then we define%
\[
\text{e}^{a}=\sum_{r=0}^{\infty}\frac{a^{r}}{r!}%
\]
in the usual way. So e$^{a}$ is a unit in $\widehat{A}$ with inverse e$^{-a}$.
It is well known that the group $F$ generated by e$^{x_{1}}$, e$^{x_{2}}$,
\ldots, e$^{x_{n}}$ is a free group with free generators e$^{x_{1}}$,
e$^{x_{2}}$, \ldots, e$^{x_{n}}$. (See [5, pages 41,42].) If $w\in F$ then%
\[
w=1+u_{1}+u_{2}+\ldots
\]
for some $u_{1},u_{2},\ldots\in A$, where $u_{i}$ is homogeneous of degree $i$
for $i=1,2,\ldots$. We set $u=u_{1}+u_{2}+\ldots$, and set%
\[
z=\log w=\log(1+u)=\sum_{r=1}^{\infty}(-1)^{r+1}\frac{u^{r}}{r},
\]
and then $w=\;$e$^{z}$. If we write%
\[
z=z_{1}+z_{2}+\ldots
\]
where $z_{i}\in A$ is homogeneous of degree $i$ for $i=1,2,\ldots$, then
$z_{i}\in L$ for $i=1,2,\ldots$. This is known as the Baker-Campbell-Hausdorff
formula. (See [5, Theorem 2.5.4].)

Now let $w=($e$^{x_{1}}$e$^{x_{2}}$\ldots e$^{x_{n}})^{q}\in F$. We apply what
Wall calls smoothing to $w$. For $i=1,2,\ldots,n$ we let $\delta_{i}%
:\widehat{A}\rightarrow\widehat{A}$ be the endomorphism given by $\delta
_{i}(x_{i})=0$, $\delta_{i}(x_{j})=x_{j}$ if $j\neq i$. So $\delta_{i}$
induces a homomorphism $\delta_{i}:F\rightarrow F$. Note that $\delta_{i}%
^{2}=\delta_{i}$ and that $\delta_{i}\delta_{j}=\delta_{j}\delta_{i}$ for all
$i,j$.

We set $w_{1}=w.(\delta_{1}w)^{-1}$ so that $\delta_{1}(w_{1})=1$. Then we set
$w_{2}=w_{1}.\delta_{2}(w_{1})^{-1}$ so that $\delta_{i}(w_{2})=1$ for
$i=1,2$. Then we set $w_{3}=w_{2}.\delta_{3}(w_{2})^{-1}$ and so on.
Eventually we obtain an element $w_{n}\in F$ such that $\delta_{i}(w_{n})=1$
for $1\leq i\leq n$. Note that $w_{n}$ is a product of $2^{n}$ elements of the
form $(\delta_{i}\delta_{j}\ldots\delta_{k}(w))^{\pm1}$ with one element for
each subset $\{i,j,\ldots,k\}\subset\{1,2,\ldots,n\}$. The exponent is $+1$ if
$|\{i,j,\ldots,k\}|$ is even, and $-1$ if it is odd. Let $w_{n}=\;$e$^{z}$
where $z=z_{1}+z_{2}+\ldots$, with $z_{r}\in L$ homogeneous of degree $r$ for
$r=1,2,\ldots$. Then $\delta_{i}(z_{r})=0$ for $i=1,2,\ldots ,n$ (for all $r$) and 
we can write $z_{r}$ as a linear combination of Lie products $[x_{j_{1}},x_{j_{2}%
},\ldots,x_{j_{r}}]$ with $\{j_{1},j_{2},\ldots,j_{r}\}=\{1,2,\ldots,n\}$. It
follows that $z_{r}=0$ for $r<n$, and that $z_{n}$ is an $\{x_{1},x_{2}%
,\ldots,x_{n}\}$-multilinear element of $L$.

So $w_{n}=1+z_{n}+u_{n+1}+u_{n+2}+\ldots$ where $u_{i}\in A$ is homogeneous of
degree $i$ for $i=n+1,n+2,\ldots$.

Next consider $w$.%
\begin{align*}
w  & =(e^{x_{1}}e^{x_{2}}\ldots e^{x_{n}})^{q}\\
& =1+\sum\psi_{X^{q}}(x_{i}x_{j}\ldots x_{k})+b
\end{align*}
where the sum is taken over all non empty subsets $\{i,j,\ldots,k\}\subset
\{1,2,\ldots,n\}$, and where $b$ is an infinite sum of terms $\alpha m$ where
$\alpha\in\mathbb{Q}$ and where $m$ is a non-multilinear product of (some of)
the generators $x_{1},x_{2},\ldots,x_{n}$. By induction%
\[
w=1+\psi_{X^{q}}(x_{1}x_{2}\ldots x_{n})+a+b
\]
where $a\in\Gamma_{n-1}$. Similarly, if $\{i,j,\ldots,k\}$ is non-empty,
$\delta_{i}\delta_{j}\ldots\delta_{k}(w)=1+c+d$ where $c\in\Gamma_{n-1}$ and
where $d$ is an infinite sum of terms $\alpha m$ where $\alpha\in\mathbb{Q}$
and where $m$ is a non-multilinear product of the generators $x_{1}%
,x_{2},\ldots,x_{n}$. Clearly $(\delta_{i}\delta_{j}\ldots\delta_{k}(w))^{-1}$
can be expressed in the same form. It follows that if we pick out the
$\{x_{1},x_{2},\ldots,x_{n}\}$-multilinear terms in $w_{n}$ then we obtain%
\[
\psi_{X^{q}}(x_{1}x_{2}\ldots x_{n})+e
\]
for some $e\in\Gamma_{n-1}$, so that%
\[
z_{n}=\psi_{X^{q}}(x_{1}x_{2}\ldots x_{n})\operatorname{mod}\Gamma_{n-1}.
\]
This implies that%
\[
z_{n}=\delta(z_{n})=\delta(\psi_{X^{q}}(x_{1}x_{2}\ldots x_{n}%
))\operatorname{mod}\Gamma_{n-1}.
\]
So%
\[
\psi_{X^{q}}(x_{1}x_{2}\ldots x_{n})=\delta(\psi_{X^{q}}(x_{1}x_{2}\ldots
x_{n}))\operatorname{mod}\Gamma_{n-1}%
\]
and by Lemma 11 this implies that%
\[
\psi_{X^{q}}(x_{1}x_{2}\ldots x_{n})=K_{n}(x_{2},\ldots,x_{n},x_{1}%
)\operatorname{mod}\Gamma_{n-1}.
\]

\end{proof}

\begin{corollary}
$\psi_{X^{q}}(x_{1}x_{2}\ldots x_{n})\in\Gamma_{n}$.
\end{corollary}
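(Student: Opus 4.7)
The plan is to extract this as an immediate consequence of Lemma 12, which already states that
\[
\psi_{X^{q}}(x_{1}x_{2}\ldots x_{n}) \equiv K_{n}(x_{2},\ldots,x_{n},x_{1}) \pmod{\Gamma_{n-1}}.
\]
So the only thing to observe is that the right-hand side lives in $\Gamma_n$ and that $\Gamma_{n-1}\subseteq \Gamma_n$.

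Specifically, I would first note that by definition $K_{n}(x_{2},\ldots,x_{n},x_{1})$ is one of the generators of $\Gamma_{n}$ (taking $m=n$ and $a_{i}=x_{\pi i}\in L_{\mathbb{Z}}$ in the definition), so $K_{n}(x_{2},\ldots,x_{n},x_{1})\in\Gamma_{n}$. Next, since $\Gamma_{n}$ is defined as the subring generated by all $K_{m}(a_{1},\ldots,a_{m})$ with $m\leq n$, and $\Gamma_{n-1}$ is the analogous subring using $m\leq n-1$, we trivially have $\Gamma_{n-1}\subseteq\Gamma_{n}$. Lemma 12 then gives
\[
\psi_{X^{q}}(x_{1}x_{2}\ldots x_{n}) = K_{n}(x_{2},\ldots,x_{n},x_{1}) + \gamma
\]
for some $\gamma\in\Gamma_{n-1}\subseteq\Gamma_{n}$, so the whole expression lies in $\Gamma_{n}$.

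There is no real obstacle here — the content of the corollary is entirely absorbed into Lemma 12; the corollary just records the consequence of upgrading the congruence mod $\Gamma_{n-1}$ to actual membership in $\Gamma_{n}$ by absorbing the representative $K_{n}(x_{2},\ldots,x_{n},x_{1})$ into the larger subring. Accordingly the proof should be a one- or two-line remark, and I would write it as such.
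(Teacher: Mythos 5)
Your proof is correct and matches the paper's intended (and unstated) argument exactly: the corollary is an immediate consequence of Lemma 12, since $K_{n}(x_{2},\ldots,x_{n},x_{1})$ is a generator of $\Gamma_{n}$ and $\Gamma_{n-1}\subseteq\Gamma_{n}$. This is precisely the observation the paper itself makes inside the proof of Lemma 12 when setting up the induction.
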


\begin{lemma}
If $\pi\in S_{n}$ then%
\[
\psi_{X^{q}}(x_{\pi1}x_{\pi2}\ldots x_{\pi n})-\psi_{X^{q}}(x_{1}x_{2}\ldots
x_{n})\in\Gamma_{n-1}
\]
and
\[
K_{n}(x_{\pi1},x_{\pi2},\ldots,x_{\pi n})-K_{n}(x_{1},x_{2},\ldots,x_{n})\in
I_{n-1}.
\]

\end{lemma}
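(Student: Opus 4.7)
Part 1: By Lemma 1, $x_{\pi 1} x_{\pi 2} \ldots x_{\pi n} = x_1 x_2 \ldots x_n + u$ for some $u \in U_n$, so
\[
\psi_{X^q}(x_{\pi 1} \ldots x_{\pi n}) - \psi_{X^q}(x_1 \ldots x_n) = \psi_{X^q}(u).
\]
Each summand of $u$ is a product of $n-1$ elements of $L_\mathbb{Z}$ (the free generators with one adjacent pair $x_i x_{i+1}$ replaced by $[x_i, x_{i+1}]$), so $\psi_{X^q}$ applied to such a summand is obtained from $\psi_{X^q}(x_1 \ldots x_{n-1}) \in \Gamma_{n-1}$ (Corollary 13 at level $n-1$) via an $L$-preserving endomorphism. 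Since $\psi_{X^q}\in R$ intertwines with such endomorphisms and $\Gamma_{n-1}$ is preserved by $L_\mathbb{Z}$-substitutions of the free generators, each summand of $\psi_{X^q}(u)$ lies in $\Gamma_{n-1}$, and so does $\psi_{X^q}(u)$.

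Part 2, reduction modulo $\Gamma_{n-1}$: Applying the $L$-preserving endomorphism $x_j \mapsto x_{\tau j}$ to Lemma 12 (which commutes with $\psi_{X^q}$ and preserves $\Gamma_{n-1}$) gives, for every $\tau \in S_n$,
\[
\psi_{X^q}(x_{\tau 1} \ldots x_{\tau n}) \equiv K_n(x_{\tau 2}, \ldots, x_{\tau n}, x_{\tau 1}) \pmod{\Gamma_{n-1}}.
\]
Subtracting the $\tau = \mathrm{id}$ case of this congruence and invoking Part 1 yields $K_n(x_{\tau 2}, \ldots, x_{\tau n}, x_{\tau 1}) \equiv K_n(x_2, \ldots, x_n, x_1) \pmod{\Gamma_{n-1}}$. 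Since the map $\tau \mapsto \sigma$ defined by $(x_{\sigma 1}, \ldots, x_{\sigma n}) = (x_{\tau 2}, \ldots, x_{\tau n}, x_{\tau 1})$ is a bijection of $S_n$, this upgrades to $K_n(x_{\sigma 1}, \ldots, x_{\sigma n}) - K_n(x_1, \ldots, x_n) \in \Gamma_{n-1}$ for every $\sigma \in S_n$.

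Promotion from $\Gamma_{n-1}$ to $I_{n-1}$ (the main obstacle): I apply $\delta$. The difference $K_n(x_{\sigma 1}, \ldots, x_{\sigma n}) - K_n(x_1, \ldots, x_n)$ is a multilinear Lie element involving $x_1$ and so is fixed by $\delta$, hence it suffices to show that $\delta$ maps the multilinear-in-$\{x_1, \ldots, x_n\}$ component of $\Gamma_{n-1}$ into $I_{n-1}$. A direct computation on monomials yields the identity $\delta(fg) = [\delta(f) \mid g]$ for every $f \in L$ and $g \in A$; combined with the standard fact that $[a \mid b] = [a, b]$ whenever $b \in L$, iteration produces $\delta(f_1 f_2 \ldots f_t) = [[\ldots[\delta(f_1), f_2], \ldots], f_t]$ for any $f_1, \ldots, f_t \in L$. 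Applied to a typical multilinear summand $K_{m_1}(\vec a_1) \cdots K_{m_t}(\vec a_t)$ of $\Gamma_{n-1}$, whose factors are multilinear in subsets $T_1, \ldots, T_t$ partitioning $\{1, \ldots, n\}$, one has $\delta(K_{m_1}(\vec a_1)) = K_{m_1}(\vec a_1)$ if $1 \in T_1$ and $\delta(K_{m_1}(\vec a_1)) = 0$ otherwise; in either case the resulting iterated Lie bracket lies in the Lie ideal $I_{n-1}$, completing Part 2.
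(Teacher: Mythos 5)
Your proof is correct and follows essentially the same route as the paper: bubble-sort the permuted product via the lemma in the introduction, recognize each commutator term as the image of $\psi_{X^{q}}(x_{1}\ldots x_{n-1})\in\Gamma_{n-1}$ under an $L_{\mathbb{Z}}$-preserving endomorphism, combine with (the relabelled form of) Lemma 12, and finish by applying $\delta$. The only difference is that you explicitly verify $\delta(\Gamma_{n-1})\subseteq I_{n-1}$ via the identity $\delta(fg)=[\delta(f)\,|\,g]$, a step the paper asserts without proof; that is a welcome addition rather than a divergence.
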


\begin{proof}
From Lemma 2 we see that%
\[
\psi_{X^{q}}(x_{\pi1}x_{\pi2}\ldots x_{\pi n})-\psi_{X^{q}}(x_{1}x_{2}\ldots
x_{n})
\]
is a sum of terms of the form%
\[
\psi_{X^{q}}(y_{1}y_{2}\ldots y_{i-1}[y_{i},y_{i+1}]y_{i+2}\ldots y_{n})
\]
where $1\leq i<n$ and where $y_{1},y_{2},\ldots,y_{n}$ is a permutation of
$x_{1},x_{2},\ldots,x_{n}$. But%
\[
\psi_{X^{q}}(y_{1}y_{2}\ldots y_{i-1}[y_{i},y_{i+1}]y_{i+2}\ldots y_{n})
\]
is the image of $\psi_{X^{q}}(x_{1}x_{2}\ldots x_{n-1})$ under an endomorphism
$\varphi:A\rightarrow A$ mapping $x_{1},x_{2},\ldots,x_{n-1}$ into
$y_{1},y_{2},\ldots,y_{i-1},[y_{i},y_{i+1}],y_{i+2},\ldots,y_{n}$. We can
assume that $\varphi(L_{\mathbb{Z}})\leq L_{\mathbb{Z}}$, which implies that
$\varphi(\Gamma_{n-1})\subset\Gamma_{n-1}$, and so Corollary 13 implies that
\[
\psi_{X^{q}}(y_{1}y_{2}\ldots y_{i-1}[y_{i},y_{i+1}]y_{i+2}\ldots y_{n}%
)\in\Gamma_{n-1}.
\]
So%
\[
\psi_{X^{q}}(x_{\pi1}x_{\pi2}\ldots x_{\pi n})-\psi_{X^{q}}(x_{1}x_{2}\ldots
x_{n})\in\Gamma_{n-1}
\]
as claimed.

This result, combined with Lemma 12 implies that%
\[
K_{n}(x_{\pi1},x_{\pi2},\ldots,x_{\pi n})-K_{n}(x_{1},x_{2},\ldots,x_{n})=a
\]
for some $a\in\Gamma_{n-1}$, and hence that%
\begin{align*}
& K_{n}(x_{\pi1},x_{\pi2},\ldots,x_{\pi n})-K_{n}(x_{1},x_{2},\ldots,x_{n})\\
& =\delta(K_{n}(x_{\pi1},x_{\pi2},\ldots,x_{\pi n})-K_{n}(x_{1},x_{2}%
,\ldots,x_{n}))\\
& =\delta(a)\\
& \in I_{n-1}%
\end{align*}
since $\delta(\Gamma_{n-1})=I_{n-1}$. (Recall that $I_{n-1}$ is the ideal of
$L_{\mathbb{Z}}$ consisting of integer linear combinations of values
$K_{m}(a_{1},a_{2},\ldots,a_{m})$ with $m<n$ and $a_{1},a_{2},\ldots,a_{m}\in
L_{\mathbb{Z}}$.)
\end{proof}

It is perhaps worth observing that the analysis in this section would have been
simpler if in [5] I had defined $K_{n}(x_{1},x_{2},\ldots,x_{n})$ to be
$\delta(\psi_{X^{q}}(x_{1}x_{2}\ldots x_{n}))$. Of course that would have meant
that the proofs of Theorem 2.4.7 and Theorem 2.5.1 in [5] would have to have
been rewritten. But the proofs would have remained essentially the same.
In fact the proof of Lemma 12 above shows that the associated Lie rings of
groups of exponent $q$ satisfy the identity $z_n=0$. So Lemma 12 (and its
proof) give a proof of Theorem 2.4.7 of [5].

\section{Proof of Theorem 1}

Recall from Section 2 that%
\[
z=\log(1+x)=\sum_{n=1}^{\infty}(-1)^{n+1}\frac{x^{n}}{n}
\]
and that%
\[
\varepsilon_{n}=\psi_{\frac{z^{n}}{n!}}\text{ for }n=0,1,\ldots.
\]
In Corollary 10 we defined%
\[
\eta=\sum_{r=0}^{\infty}\varepsilon_{1+r(p-1)}
\]
and we proved that $\sigma_{0}(\eta)=0$, $\sigma_{1}(\eta)=1_{S_{1}}$, and
that if $n\geq1$, and $\sigma_{n}(\eta)=\sum_{\pi\in S_{n}}\alpha_{\pi}\pi$,
then the coefficients $\alpha_{\pi}$ have denominators which are coprime to
$p$. We also showed that if $n\neq1\operatorname{mod}(p-1)$ then $\sum_{\pi\in
S_{n}}\alpha_{\pi}=0$.

So assume that $n\neq1\operatorname{mod}(p-1)$, and let $k$ be the least 
common multiple of
the denominators of all the coefficients which appear in $\sigma_{m}(\eta)$
for $m=1,2,\ldots,n$. Let $\eta^{\prime}=k\eta$. (Note that $k$ is coprime
to $p$.) Then $\sigma_{n}(\eta^{\prime})=\sum_{\pi\in S_{n}}\beta_{\pi}\pi$
for some integers $\beta_{\pi}$ satisfying $\sum\beta_{\pi}=0$. Lemma 12 
and Lemma 14 imply that%
\[
K_{n}(x_{1},x_{2},\ldots,x_{n})=\psi_{X^{q}}(x_{1}x_{2}\ldots x_{n})+a
\]
for some $a\in\Gamma_{n-1}$. Since $\sigma_{1}(\eta)=1_{S_{1}}$,
\begin{align*}
& \eta^{\prime}(K_{n}(x_{1},x_{2},\ldots,x_{n}))\\
& =kK_{n}(x_{1},x_{2},\ldots,x_{n})\\
& =(\eta^{\prime}\circ\psi_{X^{q}})(x_{1}x_{2}\ldots x_{n})+\eta^{\prime}(a).
\end{align*}

Now $R$ is commutative and so%
\begin{align*}
& (\eta^{\prime}\circ\psi_{X^{q}})(x_{1}x_{2}\ldots x_{n})\\
& =\psi_{X^{q}}(\eta^{\prime}(x_{1}x_{2}\ldots x_{n}))\\
& =\sum_{\pi\in S_{n}}\beta_{\pi}\psi_{X^{q}}(x_{\pi^{-1}1}x_{\pi^{-1}2}\ldots
x_{\pi^{-1}n})\\
& \in\Gamma_{n-1}%
\end{align*}
since $\psi_{X^{q}}(x_{\pi^{-1}1}x_{\pi^{-1}2}\ldots x_{\pi^{-1}n}%
)=\psi_{X^{q}}(x_{1}x_{2}\ldots x_{n})\operatorname{mod}\Gamma_{n-1}$ by Lemma
14, and since $\sum_{\pi\in S_{n}}\beta_{\pi}=0$.

Next we show that $\eta^{\prime}(a)\in\Gamma_{n-1}$. Since $a\in\Gamma_{n-1}$, 
we can express
$a$ as sum of terms of the form $mb$ where $m$ is an integer and where each
$b$ in the sum is a product $c_{1}c_{2}\ldots c_{r}$ where each $c_{i}$ has
the form $K_{s}(a_{1},a_{2},\ldots,a_{s})$ ($s<n$) with $a_{i}\in
L_{\mathbb{Z}}$ for $i=1,2,\ldots,s$. So we need to show that $\eta^{\prime
}(c_{1}c_{2}\ldots c_{r})\in\Gamma_{n-1}$ for each of these products
$c_{1}c_{2}\ldots c_{r}$. We can take $a$ to be homogeneous of degree $n$ and
so $r\leq n$. Our choice of $k$ then implies that $\sigma_{r}(\eta^{\prime
})=\sum_{\pi\in S_{r}}\gamma_{\pi}\pi$ where the coefficients $\gamma_{\pi}$
are integers. It follows that%
\[
\eta^{\prime}(c_{1}c_{2}\ldots c_{r})=\sum_{\pi\in S_{r}}\gamma_{\pi}%
c_{\pi^{-1}1}c_{\pi^{-1}2}\ldots c_{\pi^{-1}r}\in\Gamma_{n-1}.
\]

We have shown that
\[
kK_{n}(x_{1},x_{2},\ldots,x_{n})\in\Gamma_{n-1}.
\]
We also have
\[
qK_{n}(x_{1},x_{2},\ldots,x_{n})\in\Gamma_{n-1}
\]
and since $k$ is coprime to $q$ this implies that
\[
K_{n}(x_{1},x_{2},\ldots,x_{n})\in\Gamma_{n-1}.
\]
This in turn implies that%
\begin{align*}
& K_{n}(x_{1},x_{2},\ldots,x_{n})\\
& =\delta(K_{n}(x_{1},x_{2},\ldots,x_{n}))\\
& \in\delta(\Gamma_{n-1})\\
& =I_{n-1}.%
\end{align*}


\begin{thebibliography}{1}

\bibitem{hall59}
M.~Hall, {\em The theory of groups}, Macmillan, New York, 1959.

\bibitem{havasvl09}
George Havas and Michael Vaughan-Lee, {\em On counterexamples to the {H}ughes
  conjecture}, J. Algebra {\bf 322} (2009), 791--801.

\bibitem{magnus50}
Wilhelm Magnus, {\em A connection between the {B}aker-{H}ausdorff formula and a
  problem of Burnside}, Ann. of Math. {\bf 52} (1950), 111--126.

\bibitem{sanov52}
I.N. Sanov, {\em Establishment of a connection between periodic groups with
  period a prime number and {L}ie rings}, Izv. Akad. Nauk SSSR, Ser. Mat. {\bf
  16} (1952), 23--58.

\bibitem{vlee93b}
M.R. Vaughan-Lee, {\em The restricted {B}urnside problem}, second ed., Oxford
  University Press, 1993.

\bibitem{wall67}
G.E. Wall, {\em On {H}ughes' ${H}_p$-problem}, Proc. Internat. Conf. Theory of
  Groups (Canberra, 1965), Gordon and Breach, New York, 1967, pp.~357--362.

\bibitem{wall86}
G.E. Wall, {\em On the multilinear identities which hold in the {L}ie ring of a
  group of prime-power exponent}, J. Algebra {\bf 104} (1986), 1--22.

\bibitem{wall90}
G.E. Wall, {\em Dependence of {L}ie relators for {B}urnside varieties}, Lecture
  Notes in Mathematics, 1456, Springer-Verlag, Berlin, 1990, pp.~191--197.

\end{thebibliography}
\end{document}